\pgfplotsset{compat=1.13}
\newcommand{\bbR}{\mathbb{R}}
\newcommand{\bbZ}{\mathbb{Z}}
\newcommand{\bbC}{\mathbb{C}}
\newcommand{\dyad}{\mathbf{\otimes}}
\newcommand{\abs}[1]{\left\lvert #1 \right\rvert}
\newcommand{\del}{\boldsymbol{\nabla}}
\newcommand{\leftcr}{\left\{}
\newcommand{\rightcr}{\right\}}
\newcommand{\lininter}[1]{{\cal{I}}_h \left[#1 \right]}
\newcommand{\Ltwo}[1]{%
\ifthenelse{\equal{#1}{}}{L^2}{L^2(#1)}%
}
\newcommand{\Ltwoz}[1]{%
\ifthenelse{\equal{#1}{}}{L^2_0}{L^2_0(#1)}%
}
\newcommand{\Cone}[1]{%
\ifthenelse{\equal{#1}{}}{C^{1}}{C^{1}(#1)}%
}
\newcommand{\Conez}[1]{%
\ifthenelse{\equal{#1}{}}{C^{1}_{0}}{C^{1}_{0}(#1)}%
}
\newcommand{\Ctwo}[1]{%
\ifthenelse{\equal{#1}{}}{C^{2}}{C^2(#1)}%
}
\newcommand{\Ctwoz}[1]{%
\ifthenelse{\equal{#1}{}}{C^{2}_{0}}{C^{2}_{0}(#1)}%
}
\newcommand{\Cholder}[1]{%
\ifthenelse{\equal{#1}{}}{C^{0,\gamma}}{C^{0,\gamma}(#1)}%
}
\newcommand{\Cholderz}[1]{%
\ifthenelse{\equal{#1}{}}{C^{0,\gamma}_{0}}{C^{0,\gamma}_{0}(#1)}%
}
\newcommand{\Hone}[1]{%
\ifthenelse{\equal{#1}{}}{H^1}{H^1(#1)}%
}
\newcommand{\Honez}[1]{%
\ifthenelse{\equal{#1}{}}{H^1_0}{H^1_0(#1)}%
}
\newcommand{\Htwo}[1]{%
\ifthenelse{\equal{#1}{}}{H^2}{H^2(#1)}%
}
\newcommand{\Htwoz}[1]{%
\ifthenelse{\equal{#1}{}}{H^2_0}{H^2_0(#1)}%
}
\newcommand{\bolds}[1]{\boldsymbol{#1}}
\newcommand{\bnu}{\bolds{\nu}}
\begin{document}

\title{Numerical convergence of nonlinear nonlocal continuum models to local elastodynamics}

\author[1]{Prashant K. Jha}

\author[2]{Robert Lipton*}

\authormark{Prashant K. Jha and Robert Lipton}

\address[1]{\orgdiv{Department of Mathematics}, \orgname{Louisiana State University}, \orgaddress{\state{LA}, \country{USA}}}

\address[2]{\orgdiv{Department of Mathematics}, \orgname{Louisiana State University}, \orgaddress{\state{LA}, \country{USA}}}

\corres{*Robert Lipton, 384 Lockett Hall, LSU, Baton Rouge. \email{lipton@math.lsu.edu}}


\abstract{We quantify the numerical error and modeling error associated with replacing a nonlinear nonlocal bond-based peridynamic model with a local elasticity model or a linearized peridynamics model away from the fracture set. The nonlocal model treated here is characterized by a double well potential and is a smooth version of the peridynamic model introduced in \cite{CMPer-Silling}.  The solutions of nonlinear peridynamics are shown to converge to the solution of linear elastodynamics at a rate linear with respect to the length scale $\epsilon$  of non local interaction. This rate also holds for the convergence of solutions of the linearized peridynamic model to the solution of the local elastodynamic model. For local linear Lagrange interpolation the consistency error for the numerical approximation is found to depend on the ratio between mesh size $h$ and $\epsilon$. More generally for local Lagrange interpolation of order $p\geq 1$ the consistency error is of order $h^p/\epsilon$.  A new stability theory for the time discretization is provided and an explicit generalization of the CFL condition on the time step and its relation to mesh size $h$ is given. Numerical simulations are provided illustrating  the consistency error associated with the convergence of nonlinear and linearized peridynamics to linear elastodynamics. 
}
\keywords{Peridynamic modeling, numerical analysis, finite element approximation, nonlocal mechanics}

\jnlcitation{\cname{%
\author{Prashant K. Jha}, 
\author{Robert Lipton}, (\cyear{2018}), 
\ctitle{Numerical convergence of nonlinear nonlocal continuum models to local elastodynamics}, \cjournal{International Journal for Numerical Methods in Engineering}, \cvol{vol 114(13), pages 1389-1410, https://doi.org/10.1002/nme.5791}.}}

\maketitle


%

\section{Introduction}
The nonlocal formulation proposed in \cite{CMPer-Silling} provides a framework for modeling crack propagation inside solids. The basic idea is to redefine the strain in terms of the difference quotients of the displacement field and allow for nonlocal forces acting within a finite horizon. The relative size of the horizon with respect to the diameter of the domain of the specimen is denoted by $\epsilon$. The force at any given material point is determined by the deformation of all neighboring material points surrounding it within a radius given by the size of horizon. 
Computational fracture modeling using peridynamics feature formation and evolution of interfaces associated with fracture, see \cite{CMPer-Silling5}, \cite{BobaruHu}, \cite{HaBobaru}, \cite{CMPer-Agwai}, \cite{CMPer-Ghajari},  \cite{Diehl}, \cite{CMPer-Bobaru}, \cite{CMPer-Dayal}, \cite{CMPer-Silling7}, \cite{CMPer-Bobaru2}, \cite{SillBob}, \cite{CMPer-Silling5}, \cite{WeckAbe}, \cite{CMPer-Silling8}, and \cite{CMPer-Lipton2}. Theoretical analysis of different mechanical and mathematical aspects of peridynamic models can be found in \cite{CMPer-Silling7},  \cite{CMPer-Du3}, \cite{CMPer-Du5}, \cite{CMPer-Emmrich}, \cite{AksoyluParks}, \cite{AksoyluUnlu}, \cite{CMPer-Dayal}, and\cite{CMPer-Dayal2}. 
A full accounting of the peridynamics literature lies beyond the scope of this paper however several themes and applications are covered in the recent handbook \cite{Handbook}.

In the absence of fracture, earlier work demonstrates the convergence  of linear peridynamic models to the local model of linear elasticity as $\epsilon$ goes to zero, see \cite{CMPer-Weckner}, \cite{CMPer-Silling4}. The convergence of an equilibrium peridynamic model to the Navier equation in the sense of solution operators is established in \cite{CMPer-Mengesha}. Numerical analysis of linear peridynamic models for 1-d bars have been given in \cite{CMPer-Weckner} and \cite{CMPer-Bobaru}.  Related approximations of nonlocal diffusion models are discussed in \cite{CMPer-Du4}, \cite{CMPer-Chen}, and \cite{CMPer-Du1}.  A stability analysis of the numerical approximation to solutions of linear nonlocal wave equations is given in \cite{CMPer-Guan}.

In this work we analyze the discrete approximations to the nonlinear nonlocal model developed in \cite{CMPer-Lipton3},  \cite{CMPer-Lipton}. This model is a smooth version of the prototypical micro-elastic model introduced in \cite{CMPer-Silling}, see \cref{s:intro}.  
In earlier theoretical work, it has been shown that in the limit of vanishing non locality this model delivers evolutions possessing sharp displacement discontinuities associated with cracks. The limiting displacement field evolution has bounded Griffith fracture energy  and away from the fracture set satisfies classic local elastodynamics \cite{CMPer-Lipton3},  \cite{CMPer-Lipton}, \cite{CMPer-JhaLipton}.  This model motivates adaptive implementations of peridynamics for brittle fracture. In regions of the body where where brittle fracture is anticipated one would apply the nonlinear nonlocal model but in regions where no fracture is to be anticipated one would like to apply the linear elastic model.  In this paper we will assume the solution is differentiable and there is no fracture. Here we investigate the difference between numerically computed solutions for the nonlinear nonlocal bond based model with those of  the linearized nonlocal model, and  those of classic local elastodynamics.  The types of nonlocal kernels  associated with these prototypical models are central to the theory but up till now have not been treated in the literature.

In this work we show that the solutions of the nonlinear model converge to classical elastodynamics at a rate that is linear in $\epsilon$. We analyze the numerical approximation associated with linear interpolation in space for two cases: i.) when the size of horizon is fixed and the mesh size $h$ tends to zero, known as $h$-convergence, and ii.) when the size of the horizon also tends to zero and the mesh approaches zero faster than the horizon. 
For the first case we show consistency error is of order $O(\frac{h}{\epsilon})$ for both nonlinear and linearized models, see \cref{prop:3.1}. For the second we find that the consistency error for both models is $O(\frac{h}{\epsilon})+O(\epsilon)$, see \cref{prop:3.2}. 
These ideas are easily extended to higher order local Lagrange interpolation. For $p^{th}$ order local polynomial interpolations $p\geq 1$ the consistency error for both models and  case i.) is of order $O(h^p/\epsilon)$ and for both models and case ii.) is of order $O(h^p/\epsilon) + O(\epsilon)$, see \cref{prop:3.3} and \cref{prop:3.4}. These results show that the grid refinement relative to the horizon length scale has
more importance than decreasing the horizon length when establishing convergence to the
classical elastodynamics description.

Earlier related work \cite{CMPer-JhaLipton}  analyzes the nonlinear model and establishes the existence of non-differentiable H\"older continuous solutions. It is shown there that the rate of convergence of the discrete model to the continuum nonlocal model is of the order $h^\gamma/\epsilon$ where $0<\gamma\leq 1$ is the H\'older exponent.
The work presented here shows that we can improve the rate of convergence for this model if we have a-priori knowledge on the number of bounded continuous derivatives of the solution.
In this paper we have restricted the analysis and simulations to the one dimensional case to illustrate the ideas.   For higher dimensional problems the convergence rates  are the same, see \cref{ss: higherD}, and future work will address the consistency error in higher dimensions using the same techniques developed here.

A second issue is the coordination of spatial and temporal discretization to insure stability for numerical approximation of nonlocal models. Here the stability for the central difference in time approximation to the linearized model is considered. Analysis of the linearized peridynamic nonlocal model shows that the stability is given by a new explicit condition that converges to the well known CFL condition as $\epsilon\rightarrow 0$, see \cref{thm:4.2}. One no longer has an explicit stability condition for the non-linear model. However it is found that the semi-discrete approximation of the nonlinear model is stable in the energy norm, see  \cite{CMPer-JhaLipton}.

In \cref{s:numerical} we present numerical simulations that confirm the error estimates for both linearized and non-linear peridynamics. The numerical experiments show that the discretization error can be reduced by choosing the ratio $h/\epsilon$ suitably small for every choice of $\epsilon$ as $\epsilon\rightarrow 0$, see \cref{fig:error plot m converg}.  We verify the  convergence rates by simulating the peridynamic model long enough to include the boundary effects due to wave reflection in \cref{ss:nondim}. Our numerical studies confirm that the solutions of linear and nonlinear peridynamics are indistinguishable for sufficiently small horizon $\epsilon$.

The organization of this article is as follows: In \cref{s:intro}, we introduce the class of nonlocal nonlinear potentials and describe the convergence of peridynamic models to classical elastodynamics. In \cref{s:fe discr}, we introduce the finite element approximation of the model and present bounds on the discretization error. In \cref{s:time discr}, we consider the central  difference in time scheme and obtain the stability condition on $\Delta t$ as function of $\epsilon$ and $h$. In \cref{s:numerical}, we present the numerical simulations. In \cref{ss: higherD} we present the convergence of the model in higher dimensions. The proofs of the theorems are given in \cref{s:proof} and we provide our conclusions in \cref{s:concl}.

\section{Nonlocal evolution and elastodynamics}\label{s:intro}
The mathematical formulation for the nonlocal model is presented in this section. We  exhibit the convergence rate of nonlocal solutions to the solution to linear elastodynamics in the limit of vanishing peridynamic horizon.  A convergence rate is also provided for the linearized nonlocal model. The convergence rate for the nonlocal kernels treated here have not been addressed before in the literature.
\subsection{The nonlocal model}

We consider the nonlocal potentials introduced in \cite{CMPer-Lipton,CMPer-Lipton3}. 
Let $D := [a,b] \subset \bbR$ be a bounded material domain in one dimension and $J = [0,T]$ be an interval of time. The nonlocal boundary denoted by $\partial D^\epsilon$ are intervals of diameter $2\epsilon$ on either side of $D$ and  given by $(a-\epsilon,a+\epsilon)\cup(b-\epsilon,b+\epsilon)$.  The strain $S$ for the one dimensional peridynamic model is given by the difference quotient 
\begin{align*}
S(y,x;u) &:= \dfrac{u(y) - u(x)}{|y-x|}.
\end{align*}
The nonlocal force is given in terms of the non-linear two-point interaction potential $W^{\epsilon}$ defined by
\begin{align*}
W^{\epsilon}(S, y - x) &= \dfrac{2J^{\epsilon}(\abs{y - x})}{\epsilon\abs{y - x}} f(\abs{y - x} S^2),
\end{align*}
where $f: r\in\bbR^{+} \to \bbR$ is positive, smooth, and concave with following properties
\begin{equation}\label{eq:per asymptote}
\lim_{r\to 0^+} \frac{f(r)}{r} = f'(0) \qquad \text{ and } \qquad \lim_{r\to \infty} f(r) = f_{\infty} < \infty. 
\end{equation}
The potential $W^{\epsilon}(S, y - x)$ is of double well type and convex near the origin where it has one well the second well is at $\infty$ and associated with the horizontal asymptote $W^\epsilon(\infty,y - x)$, see \cref{fig:per pot}. The function $J^{\epsilon}(\abs{y - x})$  influences the magnitude of the nonlocal force due to $y$ on $x$. We define $J^{\epsilon}$ by rescaling $J(\abs{\xi})$, i.e. $J^{\epsilon}(\abs{\xi}) = J(\abs{\xi}/\epsilon)$. The influence function $J$ is zero outside the ball $[-1,1]$, and satisfies $0\leq J(\abs{\xi}) \leq M$ for all $\xi \in [-1,1]$. 

The force of two point interaction between $x$ and $y$ is derived from the nonlocal potential and given by $\partial_S W^\epsilon(S,y - x)$, see \cref{fig:first der per pot}.
For small strains the force is linear and elastic and then softens as the strain becomes larger. The critical strain, for which the force between $x$ and $y$ begins to soften, is given by $S_c(y, x) := \bar{r}/\sqrt{\abs{y - x}}$ and the force decreases monotonically for
\begin{align*}
\abs{S(y, x; u)} > S_c.
\end{align*}
Here $\bar{r}$ is the inflection point of $r:\to f(r^2)$, and is the root of following equation
\begin{align*}
f'({r}^2) + 2{r}^2 f''({r}^2) = 0.
\end{align*}

\begin{figure}
\centering
\includegraphics[scale=0.25]{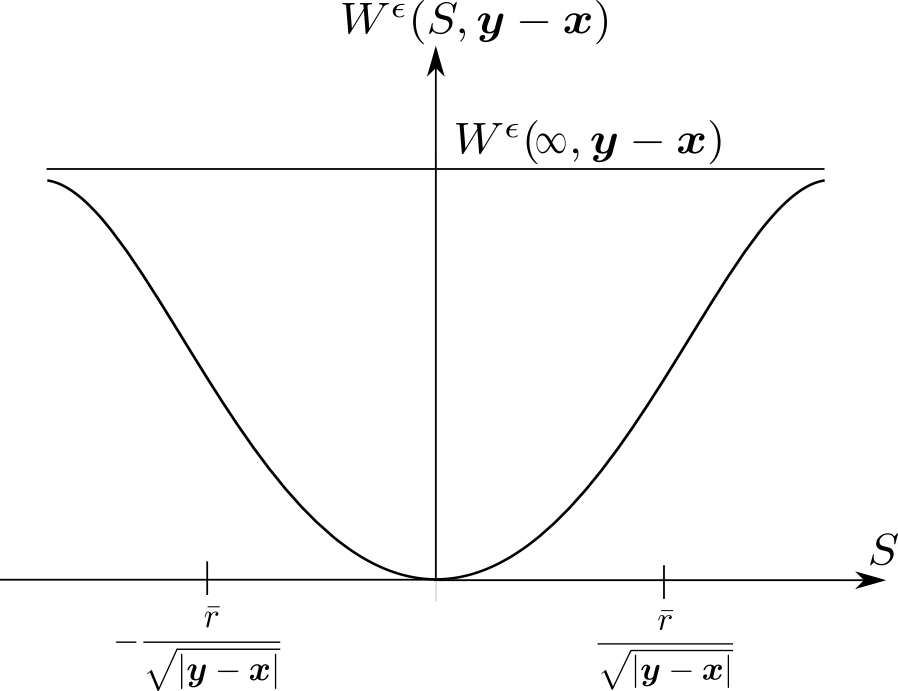}
\caption{Two-point potential $W^\epsilon(S,y - x)$ as a function of strain $S$ for fixed $y - x$.}
 \label{fig:per pot}
\end{figure}  

\begin{figure}
\centering
\includegraphics[scale=0.25]{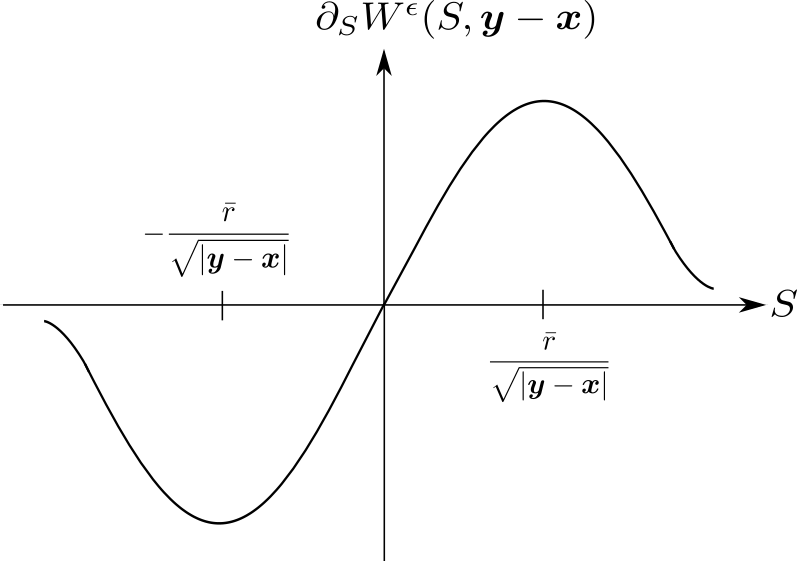}
\caption{Nonlocal force $\partial_S W^\epsilon(S,y - x)$ as a function of strain $S$ for fixed $y - x$. Second derivative of $W^\epsilon(S,y-x)$ is zero for $S=S_c:=\pm \bar{r}/\sqrt{|y -x|}$.}
 \label{fig:first der per pot}
\end{figure}

\begin{figure}
\centering
\includegraphics[scale=0.25]{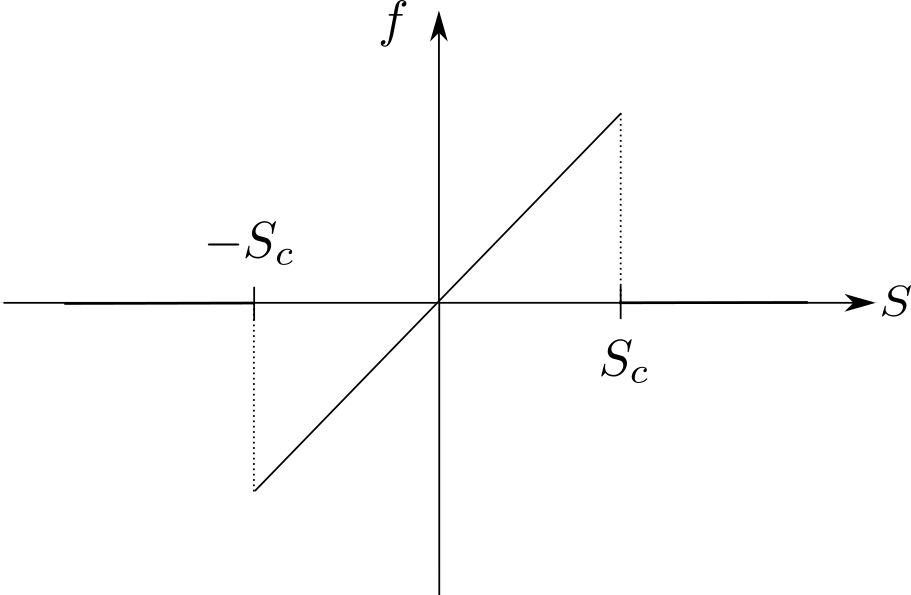}
\caption{Prototypical micro-elastic bond model of \cite{CMPer-Silling} as a function of strain $S$ for fixed $y - x$. Here the nonlocal force drops to zero at $S=S_c$.}
 \label{fig:first der per pot pmb}
\end{figure}  
The nonlocal force
$-\del PD^\epsilon$ is defined by
\begin{align*}
-\del PD^\epsilon(u)(x) &=\dfrac{1}{2\epsilon} \int_{x-\epsilon}^{x+\epsilon}\partial_S W^\epsilon(S,y - x)dy \\
&= \dfrac{2}{\epsilon^2} \int_{x-\epsilon}^{x+\epsilon} J(|y-x|/\epsilon) f'(|y-x| S(y,x;u)^2) S(y,x;u)dy.
\end{align*}
This force-strain model is a smooth version of the prototypical micro elastic model \cite{CMPer-Silling} which exhibits an abrupt drop in the force after a critical strain, see \cref{fig:first der per pot pmb}.

Similarly, we denote $-\del PD^\epsilon_l(u)(x)$ as the linearized  peridynamic force at $x$, given by
\begin{align*}
-\del PD^\epsilon_l(u)(x) &= \dfrac{2}{\epsilon^2} \int_{x-\epsilon}^{x+\epsilon} J(|y-x|/\epsilon) f'(0) S(y,x;u)dy.
\end{align*}
The corresponding linearized local model is characterized by the Young's modulus $\bbC$ given by
\begin{align}
\bbC &= \int_{-1}^{1} J(|z|) f'(0) |z| dz \label{eq:elasto const C} \\
&= \dfrac{1}{\epsilon^2} \int_{x-\epsilon}^{x+\epsilon} J(|y-x|/\epsilon)f'(0) |y-x| dy, \qquad \forall x, \epsilon>0.\notag
\end{align}

\subsection{The dynamic evolution}
We now state the initial boundary problem for three the types of evolutions: the first is given by the nonlinear nonlocal  model, the second given by the linearized nonlocal  model, and  the third given by the classic local linear elastic model. 
Let $u^\epsilon$ be the solution of the peridynamic equation of evolution, $u^\epsilon_l$ be the solution of the linearized peridynamic equation of evolution, and $u$ be the solution of elastodynamic equation of evolution with Young's modulus $\mathbb{C}$. For comparison of $u^\epsilon_l$ and $u^\epsilon$ with $u$, we assume $u$ to be extended by zero outside $D$. The displacements $u^\epsilon$, $u^\epsilon_l$, and $u$ satisfy following evolution equations, for all $(x,t) \in D\times J$, described by
\begin{align}
\rho\ddot{u}(t,x) &= \bbC u_{xx}(t,x) + b(t,x), \label{eq:elasto} \\
\rho\ddot{u}^\epsilon(t,x) &= -\del PD^\epsilon(u^\epsilon(t))(x) + b(t,x), \label{eq:peri} \\
\rho \ddot{u}^\epsilon_l(t,x) &= -\del PD^\epsilon_l(u^\epsilon_l(t))(x) + b(t,x), \label{eq:peri linear}
\end{align}
where $b(t,x)$ is a prescribed body force and the mass density $\rho$ is taken to be constant.
The boundary conditions are given by 
\begin{align*}
u^\epsilon(t,x) = 0 \qquad \text{and} \qquad \dot{u}^\epsilon(t,x) = 0, \qquad \forall t\in J, \forall x\in \partial D^\epsilon,
\end{align*}
and the same boundary conditions hold for $u^\epsilon_l$ and $u$.
The initial condition is given by
\begin{align*}
u^\epsilon(0,x) = g(x) \qquad \text{and} \qquad \dot{u}^\epsilon(0,x) = h(x), \qquad \forall x\in D,
\end{align*}
with $g=h=0$ outside some fixed subset $D'$ of $D$. The same initial condition also holds for $u^\epsilon_l$ and $u$. For future reference we denote the width of the layer $D\setminus D'$ by $\delta$.

\subsection{Convergence of nonlocal models in the limit of vanishing horizon}
In this section we provide convergence rates that show that the solution $u^\epsilon$ of the peridynamic equation converges, in the limit $\epsilon \to 0$, to the solution $u$ of the elastodynamic equation. The model treated here was considered earlier but for solutions that may not be differentiable and exhibit discontinuities \cite{CMPer-Lipton,CMPer-Lipton3}. Convergence was established for this case, however no convergence rate is available.  For linear nonlocal models with kernels different than the ones treated here, the limiting behavior has been identified in by several investigators in the peridynamics literature, see \cite{CMPer-Emmrich2,CMPer-Silling4,CMPer-Emmrich,AksoyluUnlu}.

We first provide estimates for the difference between the peridynamics force, the linearized peridynamics force, and the elastodynamics force. With these estimates in hand we are then present the rate of convergence of the solution of the nonlinear nonlocal evolution to the solution of the local linear elastic wave equation. In what follows $C^n(D)$ is the space of functions with $n$ continuous derivatives on $D$. 

{\vskip 2mm}
\begin{proposition}[Control on the difference between peridynamic force and local elastic force]\label{prop:2.1}
If $u \in C^3({D})$, and 
\begin{align*}
\sup_{ x\in  D } | u_{xxx}(x) | < \infty,
\end{align*}
then 
\begin{align}
\sup_{x\in D} |-\del PD^\epsilon(u)(x) - (-\del PD^\epsilon_l(u)(x))| &= O(\epsilon),  \label{eq:nonlinear linear force diff}\\
\sup_{x\in D} |-\del PD^\epsilon_l(u)(x) - \bbC u_{xx}(x)| &= O(\epsilon) \label{eq:linear elastic force diff},
\end{align}
so
\begin{align}\label{eq:nonlinear elastic force diff}
\sup_{x\in D} |-\del PD^\epsilon(u)(x) - \bbC u_{xx}(x)| &= O(\epsilon).  
\end{align}
If $u\in C^4({D})$, and
\begin{align}\label{eq:assump on u bd4}
\sup_{x\in D } | u_{xxxx}(x) | < \infty,
\end{align}
then
\begin{align}\label{eq:linear to elasto 1}
\sup_{x\in D} |-\del PD^\epsilon_l(u)(x) - \bbC u_{xx}(x)| = O(\epsilon^2).
\end{align}

\end{proposition}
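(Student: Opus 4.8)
The plan is to reduce each of the estimates to a one–dimensional integral over $z\in[-1,1]$ against the even weight $J(\abs z)$ via the substitution $y=x+\epsilon z$, and then to Taylor expand $u$ about $x$. The structural fact that drives every cancellation is that $z\mapsto J(\abs z)$ is even, so any piece of the integrand that is odd in $z$ (equivalently, odd about $y=x$) integrates to zero. Since \eqref{eq:nonlinear elastic force diff} is just the triangle inequality applied to \eqref{eq:nonlinear linear force diff} and \eqref{eq:linear elastic force diff}, and \eqref{eq:linear to elasto 1} is the $C^{4}$ sharpening of \eqref{eq:linear elastic force diff}, it suffices to treat the linear–versus–local comparison and the nonlinear–versus–linear comparison.

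\textbf{Linear peridynamic force versus local force.} After the change of variables,
\[
-\del PD^\epsilon_l(u)(x)=\frac{2f'(0)}{\epsilon}\int_{-1}^{1}J(\abs z)\,\frac{u(x+\epsilon z)-u(x)}{\epsilon\abs z}\,dz .
\]
I would Taylor expand $u(x+\epsilon z)$ about $x$ to third order and divide by $\epsilon\abs z$, using $z/\abs z=\operatorname{sgn}z$ and $z^{2}/\abs z=\abs z$, to obtain
\[
\frac{u(x+\epsilon z)-u(x)}{\epsilon\abs z}=\operatorname{sgn}(z)\,u_x(x)+\frac{\epsilon\abs z}{2}\,u_{xx}(x)+R_\epsilon(z),\qquad \abs{R_\epsilon(z)}\le \tfrac{\epsilon^{2}}{6}\sup_{D}\abs{u_{xxx}} .
\]
The $\operatorname{sgn}(z)$ term is odd and drops; the $u_{xx}$ term, multiplied by the prefactor, produces exactly $f'(0)\,u_{xx}(x)\int_{-1}^{1}J(\abs z)\abs z\,dz=\bbC\,u_{xx}(x)$ by \eqref{eq:elasto const C}; and the prefactor times the remainder is $O(\epsilon)$, uniformly in $x$. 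This gives \eqref{eq:linear elastic force diff}. If instead $u\in C^{4}(D)$, I would carry the expansion one order further: the extra cubic term is proportional to $z\abs z$, again odd, so it too drops, while the remainder becomes $O(\epsilon^{2})$ with constant $\sup_D\abs{u_{xxxx}}$, which is \eqref{eq:linear to elasto 1}.

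\textbf{Nonlinear peridynamic force versus linear one.} Here I would write $f'(r)-f'(0)=r\,F(r)$ with $F(r):=\int_0^1 f''(tr)\,dt$ smooth and $F(0)=f''(0)$, so that
\[
-\del PD^\epsilon(u)(x)-\bigl(-\del PD^\epsilon_l(u)(x)\bigr)=\frac{2}{\epsilon^{2}}\int_{x-\epsilon}^{x+\epsilon}J\!\Bigl(\tfrac{\abs{y-x}}{\epsilon}\Bigr)\abs{y-x}\,S(y,x;u)^{3}\,F\bigl(\abs{y-x}\,S(y,x;u)^{2}\bigr)\,dy .
\]
Since $\abs{S(y,x;u)}\le\sup_D\abs{u_x}$ and $\abs{y-x}\le\epsilon\le1$, the argument of $F$ stays in a fixed compact set on which $F$ is Lipschitz, so $F(\abs{y-x}S^{2})=f''(0)+O(\abs{y-x})$; moreover $\abs{y-x}\,S^{3}=(u(y)-u(x))^{3}/(y-x)^{2}=(y-x)\,u_x(x)^{3}+O((y-x)^{2})$ by Taylor expansion of $u$. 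Multiplying, the integrand equals $f''(0)\,u_x(x)^{3}\,(y-x)+O((y-x)^{2})$ uniformly in $x$. The leading term is odd about $y=x$ and is annihilated by the symmetric kernel $\epsilon^{-2}J(\abs{y-x}/\epsilon)$; the $O((y-x)^{2})$ remainder, integrated against that kernel, is $O(\epsilon)$. This proves \eqref{eq:nonlinear linear force diff}, and \eqref{eq:nonlinear elastic force diff} follows.

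\textbf{Main obstacle.} The one delicate point is \eqref{eq:nonlinear linear force diff}: the crude bound $\abs{f'(\abs{y-x}S^{2})-f'(0)}\le\norm{f''}_{L^\infty}\abs{y-x}\,S^{2}$ only yields $O(1)$ for the difference, because the kernel $\epsilon^{-2}J(\abs{y-x}/\epsilon)$ has total mass of order $\epsilon^{-1}$ while only one power of $\abs{y-x}\le\epsilon$ is recovered. The missing factor of $\epsilon$ comes entirely from the vanishing of the odd leading part of the nonlinear integrand against the even influence function, so the real work is isolating that leading part and checking that every Taylor remainder is uniform in $x\in D$. (Throughout, $u$ is read as its zero extension to $D\cup\partial D^\epsilon$; in the situations where the proposition is used the data, hence $u$ for short times, vanish together with all derivatives near $\partial D$, so no smoothness is lost there.)
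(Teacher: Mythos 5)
Your proposal is correct and follows essentially the same route as the paper's proof: Taylor expansion of the strain about $x$, identification of the leading odd-in-$(y-x)$ terms that cancel against the even influence function $J$, and an $O(|y-x|^2)$ (resp. $O(|y-x|^3)$) remainder that integrates to $O(\epsilon)$ (resp. $O(\epsilon^2)$) against the kernel $\epsilon^{-2}J(|y-x|/\epsilon)$. The only cosmetic difference is that you organize the nonlinear-versus-linear comparison through the factorization $f'(r)-f'(0)=rF(r)$, whereas the paper expands $\bigl(f'(|y-x|S^2)-f'(0)\bigr)S$ directly to second order in $|y-x|$; both isolate the same odd leading term $f''(0)u_x(x)^3(y-x)$ and reach the same bound.
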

{\vskip 2mm}
We introduce the usual $H^1(D)$ norm of a function $f$ defined in $D$ by
\begin{align*}
\Vert f\Vert_1=\sqrt{\int_D|f(x)|^2\,dx+\int_D|f_x(x)|^2\,dx}.
\end{align*}
We now state the theorem which shows that $u^\epsilon \to u$ with rate $\epsilon$ in the $H^1(D)$ norm uniformly in time.
\begin{theorem}[Convergence of nonlinear peridynamics to the linear elastic wave equation in the limit that the horizon goes to zero]\label{thm:conv peri elasti}
Let $e^\epsilon := u^\epsilon - u$, where $u^\epsilon$ is the solution of \cref{eq:peri} and $u$ is the solution of \cref{eq:elasto}. Suppose $u^\epsilon(t) \in C^4(D) $, for all $\epsilon >0$ and $t\in [0,T]$. Suppose there exists $C_1>0$, $C_1$ independent of the size of horizon $\epsilon$, such that
\begin{align*}
\sup_{\epsilon >0} \left\{ \sup_{(x,t)\in D \times J} | u^\epsilon_{xxxx}(t,x) | \right\} < C_1 < \infty.
\end{align*}
Then for $\epsilon<\delta$, there is a constant $C_2 > 0$ independent of $\delta,$ $\epsilon$, such that
\begin{align*}
\sup_{t\in [0,T]} \leftcr \int_{D} \rho |\dot{e}^\epsilon(t,x)|^2 dx + \int_{D} \bbC |e^\epsilon_x(t,x)|^2 dx \rightcr &\leq C_2 \epsilon^2,
\end{align*}
so $u^\epsilon \to u$ in the $H^1(D)$ norm at the rate $\epsilon$ uniformly in time $t \in [0,T]$.
\end{theorem}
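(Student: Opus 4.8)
\noindent\emph{Proof plan.} The plan is a direct energy argument in which the sole error source is the consistency bound of \cref{prop:2.1}; arranged this way, no Lipschitz property of the nonlinear nonlocal operator is needed and no exponential Gr\"onwall factor appears. First I rewrite the error equation. Subtracting \cref{eq:elasto} from \cref{eq:peri} gives $\rho\ddot e^\epsilon = -\del PD^\epsilon(u^\epsilon(t))(x) - \bbC u_{xx}$, and adding and subtracting $\bbC u^\epsilon_{xx}$ turns this into
\begin{align*}
\rho\,\ddot e^\epsilon(t,x) = \bbC\, e^\epsilon_{xx}(t,x) + \sigma^\epsilon(t,x), \qquad \sigma^\epsilon(t,x) := -\del PD^\epsilon(u^\epsilon(t))(x) - \bbC\, u^\epsilon_{xx}(t,x),
\end{align*}
on $D\times J$, using $\bbC e^\epsilon_{xx}=\bbC u^\epsilon_{xx}-\bbC u_{xx}$; the body force cancels. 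The key point of this rearrangement is that $\sigma^\epsilon(t,\cdot)$ is exactly the quantity $-\del PD^\epsilon(u)-\bbC u_{xx}$ controlled in \cref{prop:2.1}, evaluated at $u=u^\epsilon(t)$. Since $u^\epsilon$ and $u$ share the same initial data on $D$, we have $e^\epsilon(0,\cdot)\equiv 0$ and $\dot e^\epsilon(0,\cdot)\equiv 0$.

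Second, I bound $\sigma^\epsilon$. At each fixed $t$ I invoke the pointwise estimate $\sup_D|-\del PD^\epsilon(u)-\bbC u_{xx}|=O(\epsilon)$ from \cref{prop:2.1} with $u=u^\epsilon(t)$. Its hypothesis is a bound on $\sup_D|u_{xxx}|$, which I obtain \emph{uniformly in $\epsilon$ and $t$} from the standing assumption $\sup_{\epsilon>0}\sup_{D\times J}|u^\epsilon_{xxxx}|<C_1$ together with the peridynamic boundary condition: since $u^\epsilon(t)\equiv 0$ on $\partial D^\epsilon$, the derivatives $u^\epsilon_{xxx},u^\epsilon_{xx},u^\epsilon_x$ vanish at the endpoints of $D$, so integrating $u^\epsilon_{xxxx}$ from the boundary gives $\sup_D|u^\epsilon_{xxx}(t,\cdot)|\le C_1(b-a)$ and analogous bounds on the lower-order derivatives. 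Inspecting the $\epsilon$-expansion underlying \cref{prop:2.1} (the remainder after the $\bbC u_{xx}$ term involves only derivatives of $u$ up to order three and fixed bounds on $f',f'',f'''$), this yields a constant $C>0$ depending only on $C_1$ and $b-a$ with $\sup_t\sup_{x\in D}|\sigma^\epsilon(t,x)|\le C\epsilon$, hence $\sup_t\big(\int_D|\sigma^\epsilon(t,x)|^2\,dx\big)^{1/2}\le C\sqrt{b-a}\,\epsilon$. The hypothesis $\epsilon<\delta$ is what makes this consistency estimate hold uniformly up to $\partial D$, where the nonlocal force samples the frozen boundary layer.

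Third, I run the energy estimate. Multiplying the error equation by $\dot e^\epsilon$ and integrating over $D$, the left side is $\tfrac12\frac{d}{dt}\int_D\rho|\dot e^\epsilon|^2\,dx$; integrating $\int_D\bbC e^\epsilon_{xx}\dot e^\epsilon\,dx$ by parts in $x$ produces boundary terms $\bbC[e^\epsilon_x\dot e^\epsilon]_a^b$ that vanish, because $\dot u^\epsilon=\dot u=0$ at $x=a,b$ (the Dirichlet data are constant in time), so $\dot e^\epsilon$ vanishes there; what remains is $-\tfrac12\frac{d}{dt}\int_D\bbC|e^\epsilon_x|^2\,dx$. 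Writing $\mathcal E(t):=\int_D\rho|\dot e^\epsilon(t,x)|^2\,dx+\int_D\bbC|e^\epsilon_x(t,x)|^2\,dx$ for the quantity in the statement, this gives $\tfrac12\frac{d}{dt}\mathcal E(t)=\int_D\sigma^\epsilon\dot e^\epsilon\,dx$. By Cauchy--Schwarz and $\|\dot e^\epsilon(t)\|_{L^2(D)}\le\rho^{-1/2}\sqrt{\mathcal E(t)}$ we get $\frac{d}{dt}\sqrt{\mathcal E(t)}\le\rho^{-1/2}\|\sigma^\epsilon(t)\|_{L^2(D)}\le C\sqrt{(b-a)/\rho}\,\epsilon$; integrating over $[0,t]\subset[0,T]$ and using $\mathcal E(0)=0$ yields $\sqrt{\mathcal E(t)}\le C\sqrt{(b-a)/\rho}\,T\epsilon$, i.e. $\mathcal E(t)\le C_2\epsilon^2$ with $C_2:=C^2(b-a)T^2/\rho$, which depends only on $C_1,b-a,T,\rho$ and not on $\delta$ or $\epsilon$. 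The $H^1(D)$ statement follows because $\|e^\epsilon(t)\|_{L^2(D)}$ is controlled by $\|e^\epsilon_x(t)\|_{L^2(D)}$ via Poincar\'e's inequality ($e^\epsilon$ vanishes at the endpoints of $D$), hence is also $O(\epsilon)$ uniformly in $t$.

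\emph{Main obstacle.} The one genuinely delicate step is the second: obtaining the consistency bound with a constant independent of $\epsilon$. This requires converting the single a-priori bound on $u^\epsilon_{xxxx}$ into uniform bounds on the lower-order derivatives of $u^\epsilon$ that actually enter \cref{prop:2.1}, which is precisely where the boundary condition $u^\epsilon\equiv 0$ on $\partial D^\epsilon$ and the hypothesis $\epsilon<\delta$ do the work, and one must also check that \cref{prop:2.1} may be applied at points within $\epsilon$ of $\partial D$. The remaining pieces — the algebraic rearrangement of the error equation, the integration by parts with vanishing boundary terms, and the final integration of the differential inequality — are routine. An alternative to the first step is to add and subtract $-\del PD^\epsilon(u)$ and invoke an $\epsilon$-uniform $L^2$--Lipschitz estimate for $-\del PD^\epsilon$, at the cost of a Gr\"onwall factor $e^{Lt}$; the arrangement above avoids this.
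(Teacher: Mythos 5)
Your proof is correct and follows essentially the same route as the paper's: subtract the equations and add/subtract $\bbC u^\epsilon_{xx}$ to get the error equation with forcing $F=-\del PD^\epsilon(u^\epsilon)-\bbC u^\epsilon_{xx}$, bound $F$ by $C\epsilon$ via \cref{prop:2.1} and the uniform fourth-derivative hypothesis, run the energy (Gr\"onwall) estimate, and finish with Poincar\'e. The only differences are elaborations — you write out the energy inequality explicitly (showing no exponential factor is needed) and you justify the $\epsilon$-uniform bounds on the lower-order derivatives of $u^\epsilon$ by integrating from the boundary layer where $u^\epsilon$ vanishes, a step the paper leaves implicit.
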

{\vskip 2mm}

A stronger convergence result holds for the solutions $u^\epsilon_l(t)$ of the family of linearized peridynamic equations. 
\begin{theorem}[Convergence of linearized peridynamics equation to the linear elastic wave equation in the limit that the horizon goes to zero]\label{thm:conv linperi elasti}
Let $e_l^\epsilon := u_l^\epsilon - u$, where $u_l^\epsilon$ is the solution of \cref{eq:peri linear} and $u$ is the solution of \cref{eq:elasto}. Suppose $u_l^\epsilon(t) \in C^4(D) $, for all $\epsilon >0$ and $t\in [0,T]$. Suppose there exists $C_1>0$, $C_1$ independent of the size of horizon $\epsilon$, such that
\begin{align*}
\sup_{\epsilon >0} \left\{ \sup_{(x,t)\in D \times J} | (u_l^\epsilon)_{xxxx}(t,x) | \right\}< C_1 < \infty.
\end{align*}
Then, there is a constant $C_2 > 0$ independent of $\delta$, $\epsilon$, such that
\begin{align*}
\sup_{t\in [0,T]} \leftcr \int_{D} \rho |\dot{e}_l^\epsilon(t,x)|^2 dx + \int_{D} \bbC |(e_l^\epsilon)_x(t,x)|^2 dx \rightcr &\leq C_2 \epsilon^4,
\end{align*}
so $u_l^\epsilon \to u$ in the $H^1(D)$ norm at the rate $\epsilon^2$ uniformly in time $t \in [0,T]$.
\end{theorem}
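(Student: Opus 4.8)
The plan is to reduce \cref{thm:conv linperi elasti} to a linear energy estimate whose only nontrivial input is the $O(\epsilon^2)$ consistency bound \eqref{eq:linear to elasto 1} of \cref{prop:2.1}. Introduce the truncation term
\begin{align*}
\tau^\epsilon(t,x) := -\del PD^\epsilon_l(u^\epsilon_l(t))(x) - \bbC\,(u^\epsilon_l)_{xx}(t,x),
\end{align*}
so that $-\del PD^\epsilon_l(u^\epsilon_l(t)) = \bbC\,(u^\epsilon_l)_{xx}(t) + \tau^\epsilon(t)$. Subtracting \cref{eq:elasto} from \cref{eq:peri linear} and cancelling the body force, the error $e^\epsilon_l = u^\epsilon_l - u$ solves the \emph{local}-type wave equation $\rho\,\ddot e^\epsilon_l = \bbC\,(e^\epsilon_l)_{xx} + \tau^\epsilon$ on $D\times J$, with $e^\epsilon_l(0,\cdot)=0$ and $\dot e^\epsilon_l(0,\cdot)=0$ (both $u^\epsilon_l$ and $u$ start from the common data $g,h$), and with $\dot e^\epsilon_l(t,\cdot)=0$ at the endpoints of $D$ (since $\dot u^\epsilon_l$ and $\dot u$ vanish on $\partial D^\epsilon\supset\partial D$); here $u$ is the classical solution of \cref{eq:elasto}, so $(e^\epsilon_l)_{xx}$ is classical. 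Because $u^\epsilon_l(t)\in C^4(D)$ with $\sup_{\epsilon>0}\sup_{(x,t)\in D\times J}|(u^\epsilon_l)_{xxxx}(t,x)|<C_1$, applying \eqref{eq:linear to elasto 1} to $u^\epsilon_l(t)$ gives $\sup_{x\in D}|\tau^\epsilon(t,x)|\le K\epsilon^2$ with $K$ depending only on $C_1$, $\bbC$ and the moments of $J$ — in particular independent of $\epsilon$, $t$ and $\delta$ — hence $\Ltwonorm{\tau^\epsilon(t)}{D}\le\sqrt{\abs D}\,K\,\epsilon^2$.

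Next I would run the standard energy argument: test the error equation against $\dot e^\epsilon_l$ and integrate over $D$. The inertial term gives $\tfrac12\tfrac{d}{dt}\int_D\rho\,|\dot e^\epsilon_l|^2\,dx$, and integrating $\int_D\bbC\,(e^\epsilon_l)_{xx}\,\dot e^\epsilon_l\,dx$ by parts in $x$ gives $-\tfrac12\tfrac{d}{dt}\int_D\bbC\,|(e^\epsilon_l)_x|^2\,dx$ with no boundary contribution, since $\dot e^\epsilon_l$ vanishes at the endpoints of $D$. Writing
\begin{align*}
\mathcal E(t) := \int_D\rho\,|\dot e^\epsilon_l(t,x)|^2\,dx + \int_D\bbC\,|(e^\epsilon_l)_x(t,x)|^2\,dx,
\end{align*}
this produces the identity $\tfrac12\,\mathcal E'(t) = \int_D\tau^\epsilon(t,x)\,\dot e^\epsilon_l(t,x)\,dx$. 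Cauchy--Schwarz together with $\Ltwonorm{\dot e^\epsilon_l(t)}{D}\le\sqrt{\mathcal E(t)/\rho}$ then yields the differential inequality $\mathcal E'(t)\le c\,\epsilon^2\sqrt{\mathcal E(t)}$ with $c = 2\sqrt{\abs D}\,K/\sqrt\rho$.

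Finally I would close the estimate by a comparison/Gr\"onwall step. Since $\mathcal E(0)=0$, I would differentiate $\sqrt{\mathcal E(t)+\eta}$ for $\eta>0$ (to avoid dividing by a quantity that vanishes at $t=0$), bound its time derivative by $\tfrac c2\epsilon^2$, integrate over $[0,t]$, and let $\eta\to 0^+$, obtaining $\sqrt{\mathcal E(t)}\le\tfrac c2\epsilon^2 t\le\tfrac c2\epsilon^2 T$; squaring gives $\sup_{t\in[0,T]}\mathcal E(t)\le C_2\epsilon^4$ with $C_2 = c^2T^2/4 = \abs D\,K^2 T^2/\rho$, depending only on $C_1$, $\abs D$, $\bbC$, $\rho$, $T$, $J$, and independent of $\epsilon$ and $\delta$. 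This is exactly the asserted estimate, and convergence in $\Hone{D}$ at rate $\epsilon^2$ follows. There is no genuinely hard step: all the substance sits in the $O(\epsilon^2)$ bound \eqref{eq:linear to elasto 1}, which is already granted; the points that still need care are that this bound is uniform in $\epsilon$ (guaranteed by the $\epsilon$-independent constant $C_1$) and that, because $-\del PD^\epsilon_l$ is linear, \eqref{eq:linear to elasto 1} can be applied directly to the peridynamic solution $u^\epsilon_l$ itself — which is why, in contrast to the nonlinear \cref{thm:conv peri elasti}, no relation $\epsilon<\delta$ between the horizon and the boundary-layer width is required here.
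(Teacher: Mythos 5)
Your proposal is correct and follows essentially the same route as the paper: write the error equation $\rho\,\ddot e^\epsilon_l = \bbC\,(e^\epsilon_l)_{xx} + F$ with $F$ the consistency defect of $-\del PD^\epsilon_l$ applied to $u^\epsilon_l$ itself, bound $\sup|F|\le K\epsilon^2$ via \cref{eq:linear to elasto 1} and the uniform $C_1$ hypothesis, and close with the energy/Gr\"onwall estimate (the paper simply states ``identical arguments using \cref{eq:linear to elasto 1} deliver \cref{thm:conv linperi elasti}''). Your explicit $\sqrt{\mathcal E+\eta}$ regularization of the degenerate Gr\"onwall step is a welcome detail the paper leaves implicit.
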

{\vskip 2mm}

The proofs of \cref{prop:2.1} and \cref{thm:conv peri elasti} and \cref{thm:conv linperi elasti} is given in \cref{s:proof}. We now discuss the finite element approximation of the peridynamic model and show the consistency of the discretization for both piecewise constant and linear interpolation. 

\section{Discrete approximation}\label{s:fe discr}
In this section, we introduce the spatial discretization for the peridynamics evolution. To introduce the ideas we use a linear continuous interpolation over uniform mesh and write the equation of motion of displacement at the mesh points. This type of approximation has been analyzed in \cite{CMPer-Du2} in the 1-d setting and further extended to higher dimensions in \cite{CMPer-Tian3,CMPer-Du4} for a significant class quasi-static problems with linear kernels different than the ones treated  in this investigation. 


  
Let $h$ characterize the mesh size and be given by the distance between grid points. We let $\overline{D}$ and $\overline{\partial D^\epsilon}$ denote the closure of the sets ${D}$ and $\partial D^\epsilon$. To fix ideas we will suppose $\overline{D}$ and $\overline{\partial D^\epsilon}$ contain an integral number of elements of the mesh. Let $ D_h = D\cap h \bbZ$ and $\partial D^{\epsilon,h} = \partial D^\epsilon \cap h \bbZ$, and let $K = \{i\in \bbZ: ih \in \overline{D}\}$ and $K_\epsilon=\{i\in \bbZ: ih \in \overline{\partial D^\epsilon}\}$. Here $K_\epsilon$ corresponds to the list of nodes located inside the closure of the nonlocal boundary $\partial D^\epsilon$. We assume $x_i = ih$. We define the interpolation operator $I_h[\cdot]$, for a given function $g: \overline{D\cup \partial D^\epsilon} \to \bbR$ as follows
\begin{align*}
\lininter{g(y)} &= \sum_{i \in K \cup K_\epsilon} g(x_i) \phi_i(y),
\end{align*}
where $\phi_i(\cdot)$ is the interpolation function associated to the node $i$ and $\{\phi_i\}_{i\in {K\cup K_\epsilon}}$ is a partition of unity, i.e.,
\[
\sum_{i\in {K\cup K_\epsilon}} \phi_i(x) = 1
\]
for all $x\in \overline{D\cup \partial D^\epsilon}$. 
In order to expedite the presentation we assume the diameter of nonlocal interaction $2\epsilon$ is fixed and always contains an integral number of grid points $2m+1$. For this choice $\epsilon=mh$ where $m$ increases as $h$ decreases. When we investigate $m$ convergence we will allow both $\epsilon$ and $h$  to decrease. 

We also consider extensions of discrete sets defined on the nodes $K\cup K_\epsilon$. We write the function $v(t,x_i)$ defined at node $x_i$ as $v_i(t)$ and define the discrete set $\{{v}_i(t) \}_{K\cup K_\epsilon}$.
The function $\hat{u}^\epsilon_h(t)$ is the extension of discrete set $\{\hat{u}^\epsilon_i(t) \}_{K\cup K_\epsilon}$ using the interpolation functions and is defined by
\begin{align*}
\hat{u}^\epsilon_h(t)=E[\{\hat{u}^\epsilon_i(t) \}_{K\cup K_\epsilon}] &= \sum_{i\in K\cup K_\epsilon} \hat{u}^\epsilon_i (t)\phi_i(x), \qquad \forall x\in \overline{D\cup \partial D^\epsilon}.
\end{align*}
We also have the the body force ${b}_h(t)$ given by the extension of discrete set $\{b_i(t) \}_{K\cup K_\epsilon}$ defined by
\begin{align*}
b_h(t)=E[\{ b_i(t) \}_{K\cup K_\epsilon}] &= \sum_{i\in K\cup K_\epsilon} b_i (t)\phi_i(x), \qquad \forall x\in \overline{D\cup \partial D^\epsilon}.
\end{align*}

Let $\hat{u}^\epsilon_h(t)$ be the solution of following equation
\begin{align}\label{eq:fe approx}
\rho\ddot{\hat{u}}^\epsilon_i(t) &= -\del PD^\epsilon(\hat{u}^\epsilon_h(t))(x_i) + b_i(t),
\end{align}
with initial condition defined at the nodes given by
\begin{align*}
\hat{u}^\epsilon_i(0) = f(x_i), \qquad \dot{\hat{u}}^\epsilon_i(0) = g(x_i),  \qquad \forall i \in K ,
\end{align*}
or equivalently given by the extension of the discrete sets
\begin{align}\label{eq: fe approxic}
\hat{u}^\epsilon_h(0) = f_h, \qquad \dot{\hat{u}}^\epsilon_h(0) = g_h,  \qquad \forall i \in K ,
\end{align}
and homogeneous boundary condition given by
\begin{align}\label{eq:fe approxbc}
\hat{u}^\epsilon_i(t) = 0, \qquad \dot{\hat{u}}^\epsilon_i(t)=0, \qquad \forall i \in K_\epsilon.
\end{align}


Similarly, the discrete set $\{\hat{u}^\epsilon_{l,i}(t) \}_{i\in K\cup K_\epsilon}$, with subscript $l$, is extended by interpolation to the function  $\hat{u}^\epsilon_{l,h}(t)=E[\{\hat{u}^\epsilon_{l,i}(t) \}_{i\in K\cup K_\epsilon}]$ and satisfies the linear peridynamic equation 
\begin{align}\label{eq:fe lin approx}
\rho\ddot{\hat{u}}^\epsilon_{l,i}(t) &= -\del PD^\epsilon_{l}(\hat{u}^\epsilon_{l,h}(t))(x_i) + b_i(t) \notag \\
&=\dfrac{2}{\epsilon^2} \sum_{\substack{j \in K\cup K_\epsilon,\\
j\neq i}} f'(0) (\hat{u}^\epsilon_{l,j}(t)-\hat{u}^\epsilon_{l,i}(t))\int_{x_i - \epsilon}^{x_i+\epsilon} \frac{\phi_j(y) J(|y-x_i|/\epsilon)}{|y-x_i|} dy + b_i(t),
\end{align}
with initial conditions \cref{eq: fe approxic} and boundary conditions \cref{eq:fe approxbc}.

We now write \cref{eq:fe lin approx} in vector form and in the next section we will use this representation to provide an explicit stability constraint on time step and mesh size for the the linear peridynamic evolution. Let $U_{l,h}(t) = (\hat{u}^\epsilon_{l,i}(t))_{i\in K} $ be the vector of the approximate solution evaluated at the nodes. Then, \cref{eq:fe lin approx} can be written as
\begin{align}\label{eq:fe lin matrix form}
\rho \ddot{U}_{l,h}(t) &= A U_{l,h}(t) + B(t),
\end{align}
where $a_{ij}$ are defined as
\begin{align}\label{eq: matrixa}
a_{ij} &= \begin{cases}
\bar{a}_{ij} &\qquad \text{if } j\neq i, \\
-\sum_{\substack{k \neq i,\\
k \in K\cup K_\epsilon}} \bar{a}_{ik} &\qquad \text{if } j = i
\end{cases}
\end{align}
where 
\begin{align}\label{eq:def matrix A}
\bar{a}_{ij} &= \dfrac{2}{\epsilon^2} {f'(0)} \int_{x_i - \epsilon}^{x_i + \epsilon} \frac{\phi_j(y) J (|y - x_i|/\epsilon) }{|y-x_i|}dy.
\end{align}
$B(t) = (b_i(t))_{i\in K}$ is the body force vector with 
\begin{align*}
b_i(t) = b(t,x_i ).
\end{align*}

We point out that nonzero nonlocal boundary conditions can be prescribed on $\partial D^\epsilon$. To do this use the standard approach and include the known displacements corresponding to the nonlocal boundary $K_\epsilon$ on the right hand side vector according to the rule, 
\begin{align*}
b_i(t) = b(t,x_i ) + \sum_{j \in K_\epsilon, j \neq i} \bar{a}_{ij} \hat{u}^\epsilon_{l,j}.
\end{align*}

To fix ideas we first use linear continuous interpolation functions $\phi_i(x)$. 



\textbf{Linear continuous interpolation: }Let $i \in K\cup K_\epsilon$. We define $\phi_i(x)$ as follows
\begin{align*}
\phi_i(x) = \begin{cases}
0 & \qquad \text{if } x\notin [x_{i-1}, x_{i+1}], \\
\dfrac{x - x_{i-1}}{h}& \qquad \text{if } x\in [x_{i-1}, x_{i}], \\
\dfrac{x_{i+1} - x}{h}& \qquad \text{if } x\in [x_{i}, x_{i+1}],
\end{cases}
\end{align*}
with $x_{i+1}-x_i=h$, $i\in K\cup K_\epsilon$ and 
\begin{align*}
\sum_{i\in K\cup K_\epsilon}\phi_i=1,
\end{align*}
and for $g\in C^2(\overline{D})$ we have
\begin{align*}
|\lininter{g(x)} - g(x) | &\leq \sup_{z} |g''(z)| h^2.
\end{align*}

\subsection{Consistency error}
\label{section consistency}
We present bounds on the consistency error due to discretization for both the nonlinear peridynamic force and the linearized peridynamic force. The error is seen to depend on  the ratio of mesh size to non-locality, i.e., $h/\epsilon$. The numerical examples in given in \cref{s:numerical} for both linear and nonlinear nonlocal models  corroborate this trend.  

\textbf{$h$-convergence: }We keep $\epsilon$ fixed and estimate the error with respect to mesh size $h$. 

{\vskip 2mm}
\begin{proposition}[Consistency error: peridynamic approximation]\label{prop:3.1}
For linear continuous interpolation, if $u\in C^3(D)$ and $u_{xxx}$ is bounded on $D$  then we have for
linearized peridynamic force
\begin{align}\label{eq:claim 2}
\sup_{i\in K} |\del PD^\epsilon_{l}(\lininter{u})(x_i) -\del PD^\epsilon_l(u)(x_i)| &= O(h/\epsilon),
\end{align} 
and for the nonlinear peridynamic force we have
\begin{align}\label{eq:claim 2nonlin}
\sup_{i\in K} |\del   PD^\epsilon(\lininter{u})(x_i) - \del PD^\epsilon(u)(x_i)| &= O(h/\epsilon).
\end{align} 

\end{proposition}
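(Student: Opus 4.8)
The plan is to estimate the consistency error by inserting the interpolation error $e_h := \mathcal{I}_h[u] - u$ into the force operators and exploiting the fact that the nonlocal kernel carries a factor $1/|y-x|$ which, when integrated against the interpolation error over a ball of radius $\epsilon$, produces the ratio $h^2/\epsilon$ after dividing by the prefactor $\epsilon^{-2}$. Concretely, for the linearized force I would write
\begin{align*}
-\del PD^\epsilon_l(\mathcal{I}_h[u])(x_i) - (-\del PD^\epsilon_l(u)(x_i)) = \dfrac{2f'(0)}{\epsilon^2}\int_{x_i-\epsilon}^{x_i+\epsilon} J(|y-x_i|/\epsilon)\,\dfrac{e_h(y)-e_h(x_i)}{|y-x_i|}\,dy,
\end{align*}
using linearity of $-\del PD^\epsilon_l$ in its argument and the definition of $S$. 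Since $x_i$ is a node, $e_h(x_i)=0$, so only $e_h(y)$ survives. Then I would bound $|e_h(y)|\le \sup_z|u''(z)|\,h^2$ by the interpolation estimate quoted just before the proposition, pull it out, and be left with $\tfrac{2f'(0)}{\epsilon^2}\sup|u''|\,h^2\int_{x_i-\epsilon}^{x_i+\epsilon} J(|y-x_i|/\epsilon)\,|y-x_i|^{-1}\,dy$. The remaining integral is $O(\epsilon)$ (substitute $z=(y-x_i)/\epsilon$: it equals $\int_{-1}^{1}J(|z|)|z|^{-1}dz$, a finite constant, times $\epsilon^0$ — wait, the $1/|y-x_i|$ scales as $1/\epsilon$ and $dy$ as $\epsilon$, so the integral is $O(1)$), giving an overall $O(h^2/\epsilon^2)$; but one must be more careful because $\int_{-1}^1 J(|z|)|z|^{-1}dz$ may diverge at $z=0$ if $J(0)\ne 0$. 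The correct move is to use the sharper pointwise bound: near the node $x_i$, $|e_h(y)| \le \sup|u''|\,h|y-x_i|$ when $|y-x_i|\le h$ (so the $1/|y-x_i|$ singularity is cancelled), and $|e_h(y)|\le \sup|u''|\,h^2$ elsewhere, splitting the integral into $|y-x_i|\le h$ and $h<|y-x_i|\le\epsilon$; the first piece contributes $O(h^2/\epsilon^2)$ and the second $O((h^2/\epsilon^2)\log(\epsilon/h))$ — so to get the clean $O(h/\epsilon)$ claimed I expect the argument actually only needs the crude bound $|e_h(y)| \le C h$ (first-order, not second-order, interpolation accuracy in $L^\infty$ suffices here since we are dividing by $|y-x_i|$ which is at least... no). I would instead follow the cleanest route: bound $|e_h(y) - e_h(x_i)| = |e_h(y)| \le C\min\{h^2, h\,|y-x_i|\}$ is not even needed; just use $|e_h(y)|/|y-x_i| \le \|e_h'\|_\infty \le C h$ \emph{wherever $e_h$ is differentiable}, i.e.\ away from nodes, by the mean value theorem applied between $y$ and the node $x_i$, using $\sup_z|u''(z)|$ to control $\|e_h'\|_\infty \le C h$. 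Then the integral becomes $\tfrac{2f'(0)}{\epsilon^2}\cdot Ch\cdot\int_{x_i-\epsilon}^{x_i+\epsilon}J(|y-x_i|/\epsilon)\,dy = \tfrac{2f'(0)}{\epsilon^2}\cdot Ch\cdot O(\epsilon) = O(h/\epsilon)$, which is exactly \cref{eq:claim 2}.

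For the nonlinear force \cref{eq:claim 2nonlin}, I would use that $\partial_S W^\epsilon(S,y-x) = \tfrac{2}{\epsilon^2}J(|y-x|/\epsilon)\,f'(|y-x|S^2)\,S$, and write the difference as an integral against the integrand evaluated at $S(y,x;\mathcal{I}_h[u])$ versus $S(y,x;u)$. Setting $g(S) := f'(|y-x|S^2)\,S$, which is globally Lipschitz on $\mathbb R$ because $f$ is smooth with bounded derivatives (the concavity plus the asymptotic conditions \cref{eq:per asymptote} give boundedness of $g'$), I get $|g(S_1)-g(S_2)| \le L|S_1-S_2|$ with $L$ independent of $\epsilon$. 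Since $S(y,x_i;\mathcal{I}_h[u]) - S(y,x_i;u) = (e_h(y)-e_h(x_i))/|y-x_i| = e_h(y)/|y-x_i|$, the same estimate $|e_h(y)|/|y-x_i|\le Ch$ from the mean value theorem applies, and the argument closes identically to the linear case with an extra constant $L$.

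The main obstacle — and the only genuinely delicate point — is controlling the factor $1/|y-x_i|$ in the kernel near the diagonal. A naive substitution of the $O(h^2)$ $L^\infty$ interpolation bound produces a spurious logarithm or an apparently divergent constant $\int_{-1}^1 J(|z|)|z|^{-1}\,dz$. The resolution, which I would emphasize, is that $\mathcal{I}_h[u](x_i) = u(x_i)$ exactly at the node, so $e_h$ vanishes at $x_i$ and $|e_h(y)| \le \|e_h'\|_{L^\infty}|y-x_i| \le C h\,|y-x_i|$, the factor $|y-x_i|$ cancelling the kernel singularity and leaving a harmless integral $\int_{x_i-\epsilon}^{x_i+\epsilon} J(|y-x_i|/\epsilon)\,dy = \epsilon\int_{-1}^1 J(|z|)\,dz \le 2M\epsilon$. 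This is exactly where the hypothesis that $x_i$ is a grid node (so the consistency error is measured only at nodes, $\sup_{i\in K}$) is essential, and where $u\in C^2(D)$ — hence $\|e_h'\|_\infty = O(h)$ — enters; the stated hypothesis $u\in C^3(D)$ with bounded $u_{xxx}$ is more than enough and is presumably carried along for uniformity with \cref{prop:2.1}.
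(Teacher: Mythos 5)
Your proposal is correct and follows essentially the same route as the paper: both arguments reduce to the uniform strain bound $|S(y,x_i;\mathcal{I}_h[u])-S(y,x_i;u)|\le Ch$ (exploiting that the interpolant is exact at the node $x_i$, so the kernel singularity $1/|y-x_i|$ is cancelled), then integrate against $J$ with the $\epsilon^{-2}$ prefactor, and handle the nonlinear case by Lipschitz control of $S\mapsto f'(|y-x|S^2)S$ on the bounded range of strains (the paper does this via an explicit two-term expansion using bounded $f''$). The only cosmetic difference is that you derive the $O(h)$ strain bound from the $W^{1,\infty}$ interpolation estimate $\|(\mathcal{I}_h[u]-u)'\|_{\infty}\le Ch$, whereas the paper uses the $L^\infty$ estimate $O(h^2)$ together with $|y-x_i|\ge h$ off the two elements adjacent to $x_i$.
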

{\vskip 2mm}

We now examine what happens as $\epsilon$ goes to zero. Combining \cref{prop:2.1}, \cref{prop:3.1} and applying the triangle inequality gives:

{\vskip 2mm}
\begin{proposition}[Consistency error: peridynamic approximation in the limit $\epsilon\to 0$]\label{prop:3.2}
For linear continuous interpolation, if $u\in C^3(D)$ with $u_{xxx}$ bounded then we have for the linearized peridynamic force
\begin{align}
\sup_{i\in K} |-\del  PD^\epsilon_{l}(\lininter{u})(x_i) - \bbC u_{xx}(x_i)| = O(\epsilon) +  O(h/\epsilon)\label{eq:claim 5},
\end{align} 
and for the nonlinear peridynamic force
\begin{align}
\sup_{i\in K} |-\del  PD^\epsilon(\lininter{u})(x_i) - \bbC u_{xx}(x_i)| = O(\epsilon) +  O(h/\epsilon)\label{eq:claim 5nonlinear}.
\end{align} 

\end{proposition}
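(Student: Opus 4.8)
The plan is to deduce \cref{prop:3.2} directly from \cref{prop:2.1} and \cref{prop:3.1} by the triangle inequality, exactly as the text preceding the statement announces. The quantity we must control is, for each fixed $i \in K$,
\begin{align*}
|-\del PD^\epsilon_{l}(\lininter{u})(x_i) - \bbC u_{xx}(x_i)| \le |-\del PD^\epsilon_{l}(\lininter{u})(x_i) + \del PD^\epsilon_{l}(u)(x_i)| + |-\del PD^\epsilon_{l}(u)(x_i) - \bbC u_{xx}(x_i)|.
\end{align*}
The first term on the right is bounded by $O(h/\epsilon)$ uniformly in $i$ by \cref{eq:claim 2} of \cref{prop:3.1}, since $u \in C^3(D)$ with $u_{xxx}$ bounded is precisely the hypothesis there. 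The second term is bounded by $O(\epsilon)$ uniformly in $x \in D$, hence in particular at the nodes $x_i$, by \cref{eq:linear elastic force diff} of \cref{prop:2.1}, whose hypothesis ($u \in C^3(D)$, $u_{xxx}$ bounded) is again exactly what we are assuming. Taking the supremum over $i \in K$ gives \cref{eq:claim 5}. The nonlinear estimate \cref{eq:claim 5nonlinear} follows the same way, splitting through $\del PD^\epsilon(u)(x_i)$: the first piece is $O(h/\epsilon)$ by \cref{eq:claim 2nonlin} and the second is $O(\epsilon)$ by \cref{eq:nonlinear elastic force diff}.

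One bookkeeping point to check is that the constants hidden in the two $O(\cdot)$ symbols do not depend on $\epsilon$ in a way that would spoil the sum; inspecting \cref{prop:2.1} and \cref{prop:3.1}, the $O(\epsilon)$ and $O(h/\epsilon)$ bounds there are stated with constants depending only on $\sup_D |u_{xxx}|$, $M$, $f'(0)$, $\bbC$, and the diameter of $D$ — all independent of $\epsilon$ and $h$ — so adding them is legitimate and the resulting bound is genuinely of the form $O(\epsilon) + O(h/\epsilon)$ with an $\epsilon,h$-independent constant. Since $\epsilon$ is held fixed in neither regime here (we are now letting $\epsilon \to 0$ with $h \to 0$ possibly faster), it is worth remarking that the estimate is uniform on the relevant range of $\epsilon$; this is immediate from the uniformity already present in the two cited results.

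There is essentially no obstacle: the proposition is a corollary, and the only "work" is verifying that the hypotheses of the two input results coincide with the hypothesis stated here (they do) and that the error terms add cleanly (they do). If one wanted to be thorough, the single place to be slightly careful is the direction of the sign in $-\del PD^\epsilon_l$ versus $\del PD^\epsilon_l$ when chaining the inequalities, but this is purely notational. I would therefore present the proof as a two-line application of the triangle inequality for each of the two estimates, citing \cref{prop:3.1} and \cref{prop:2.1}.
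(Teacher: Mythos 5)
Your proposal is correct and is exactly the paper's argument: the text introduces \cref{prop:3.2} with the words ``Combining \cref{prop:2.1}, \cref{prop:3.1} and applying the triangle inequality gives,'' and no further proof is supplied in \cref{s:proof} beyond that remark. Your decomposition through $\del PD^\epsilon_l(u)(x_i)$ (resp.\ $\del PD^\epsilon(u)(x_i)$), the citations to \cref{eq:claim 2}, \cref{eq:linear elastic force diff}, \cref{eq:claim 2nonlin}, and \cref{eq:nonlinear elastic force diff}, and the check that the implied constants are independent of $\epsilon$ and $h$ are all consistent with what the paper intends.
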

{\vskip 2mm}
This proposition shows that the consistency error for both nonlinear and linearized nonlocal models are controlled by the ratio of the mesh size to the horizon. This ratio must decrease to zero as the horizon goes to zero in order for the consistency error to go to zero.
We conclude pointing out that the linearized kernels treaded in this work are different than those ones considered in \cite{CMPer-Du2}.

\subsection{Consistency error for higher order interpolation approximation}

It is easy to improve the convergence results if we assume more differentiability for the solution. We  will assume that we have uniform control of  $p+1$ bounded derivatives of solutions with respect to $\epsilon$, and discretize using higher order local Lagrangian shape functions. In this section we estimate the consistency error for this case.   Let $h$ be the mesh size and $p$ be the order of interpolation. The discretization of the domain is now $D_h = D \cap (h/p)\bbZ$ and $\partial D^{\epsilon,h} = \partial D^\epsilon \cap (h/p)\bbZ$. Let $K := \{i\in \bbZ: i(h/p) \in \bar{D} \}$ and $K^\epsilon := \{i\in \bbZ: i(h/p) \in \bar{\partial D^\epsilon}\}$. The mesh points are denoted by $x_i = ih/p$, the interpolation operator is denoted by $\lininter{\cdot}$, and the extension operator is denoted by $E[\cdot]$. The approximate nonlinear peridynamic equation \cref{eq:fe approx}, and approximate linearized peridynamic equation \cref{eq:fe lin approx} are now defined for the $p^{th}$  order interpolations $\{\phi_i\}$. We now state the following results:

{\vskip 2mm}
\begin{proposition}[Consistency error: peridynamic approximation]\label{prop:3.3}
For continuous interpolation of order $p$, if $u\in C^{p+1}(D)$ and the $(p+1)^{\text{th}}$ derivative of $u$ is bounded on $D$  then we have for the linearized peridynamic force
\begin{align}\label{eq:claim 6}
\sup_{i\in K} |\del PD^\epsilon_{l}(\lininter{u})(x_i) -\del PD^\epsilon_l(u)(x_i)| &= O(h^p/\epsilon),
\end{align} 
and for the nonlinear peridynamic force
\begin{align}\label{eq:claim 6nonlin}
\sup_{i\in K} |\del   PD^\epsilon(\lininter{u})(x_i) - \del PD^\epsilon(u)(x_i)| &= O(h^p/\epsilon).
\end{align} 
\end{proposition}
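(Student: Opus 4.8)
The plan is to repeat the argument behind \cref{prop:3.1}, replacing the quadratic interpolation bound by the order-$p$ estimate. Write $e:=\lininter{u}-u$ for the interpolation error. Two properties of $e$ drive the whole proof: $e$ vanishes at every node $x_i$, and standard Lagrange interpolation theory for shape functions of order $p$ on elements of length $h$ gives, since $u\in C^{p+1}(D)$ has bounded $(p+1)^{\text{th}}$ derivative,
\begin{align*}
\Vert e\Vert_{L^\infty(D)}\le C\,h^{p+1}\sup_{x\in D}|u^{(p+1)}(x)|,\qquad \Vert e'\Vert_{L^\infty(D)}\le C\,h^{p}\sup_{x\in D}|u^{(p+1)}(x)|,
\end{align*}
with $e$ continuous and piecewise polynomial, hence absolutely continuous.

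The step I expect to carry the argument is the pointwise bound that absorbs the kernel singularity $1/|y-x_i|$: for a node $x_i$ and any $y\neq x_i$, since $e(x_i)=0$,
\begin{align*}
\frac{|e(y)|}{|y-x_i|}=\frac{1}{|y-x_i|}\left|\int_{x_i}^{y}e'(s)\,ds\right|\le \Vert e'\Vert_{L^\infty(D)}\le C\,h^{p}\sup_{x\in D}|u^{(p+1)}(x)|.
\end{align*}
Next I would treat the linearized force: inserting $\lininter{u}$ and $u$ into the definition of $-\del PD^\epsilon_l$ and using $e(x_i)=0$, the difference equals, up to sign, $\tfrac{2f'(0)}{\epsilon^2}\int_{x_i-\epsilon}^{x_i+\epsilon}J(|y-x_i|/\epsilon)\,\tfrac{e(y)}{|y-x_i|}\,dy$; estimating the integrand by $M\,C\,h^p\sup_{D}|u^{(p+1)}|$ and integrating over an interval of length $2\epsilon$ gives \cref{eq:claim 6}, of order $O(h^p/\epsilon)$.

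For the nonlinear force I would set $\Psi(S,r):=f'(rS^2)\,S$, so that the integrand of $-\del PD^\epsilon(v)$ at $x_i$ is $\tfrac{2}{\epsilon^2}J(|y-x_i|/\epsilon)\,\Psi\!\big(S(y,x_i;v),|y-x_i|\big)$ for $v=\lininter{u}$ or $v=u$. Since $u$ and $\lininter{u}$ have uniformly bounded first derivatives (the latter by the bound on $e'$ above), the strains $S(y,x_i;u)$ and $S(y,x_i;\lininter{u})$ remain in a fixed bounded set, and since $r\le\epsilon$ is bounded the argument $rS^2$ of $f'$ and $f''$ stays in a fixed compact interval; on it $\partial_S\Psi(S,r)=f'(rS^2)+2rS^2f''(rS^2)$ is bounded because $f\in C^2$, so $\Psi(\cdot,r)$ is Lipschitz with a constant $L$ independent of $r$. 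Because $S(y,x_i;\lininter{u})-S(y,x_i;u)=e(y)/|y-x_i|$, the key pointwise bound gives $|\Psi(S(y,x_i;\lininter{u}),|y-x_i|)-\Psi(S(y,x_i;u),|y-x_i|)|\le L\,C\,h^p\sup_{D}|u^{(p+1)}|$, and integrating $\tfrac{2}{\epsilon^2}J(|y-x_i|/\epsilon)$ against this over an interval of length $2\epsilon$ yields \cref{eq:claim 6nonlin}, again $O(h^p/\epsilon)$. The only genuinely delicate point — the part I would be most careful about — is this uniform-in-$r$ Lipschitz estimate: one must check that the strains produced by $\lininter{u}$ do not blow up, which they do not thanks to the $O(h^p)$ control on $(\lininter{u}-u)'$, so that the nonlinearity $f$ is only ever evaluated on a compact set where its smoothness may be used. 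Everything else is a routine transcription of the case $p=1$.
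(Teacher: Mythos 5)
Your proof is correct and follows essentially the same route as the paper: the paper's key interpolation lemma is exactly your strain bound $|S(y,x_i;\lininter{u})-S(y,x_i;u)|\le Ch^p$, after which both arguments integrate the pointwise error against the kernel over an interval of length $2\epsilon$ with the prefactor $2/\epsilon^2$ to obtain $O(h^p/\epsilon)$. Your derivation of that bound via $e(x_i)=0$ and $\Vert e'\Vert_{L^\infty}\le Ch^p$ uniformly covers $y$ in the element containing $x_i$ (a case the paper's lemma proof leaves implicit, handling explicitly only $|y-x_i|\ge h$), and your uniform-in-$r$ Lipschitz estimate for $\Psi(S,r)=f'(rS^2)S$ is a tidier packaging of the paper's Taylor expansion of $f'$ about the interpolated strain; neither changes the substance of the argument.
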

{\vskip 2mm}

Next we examine what happens as we send $\epsilon$ to zero. Combining \cref{prop:2.1}, \cref{prop:3.3} and applying the triangle inequality gives:

{\vskip 2mm}
\begin{proposition}[Consistency error: peridynamic approximation in the limit $\epsilon\to 0$]\label{prop:3.4}
For continuous interpolation of order $p$, if $u\in C^{p+1}(D)$ with $(p+1)^{\text{th}}$ derivative of $u$  bounded then we have for the linearized peridynamic force
\begin{align}
\sup_{i\in K} |-\del  PD^\epsilon_{l}(\lininter{u})(x_i) - \bbC u_{xx}(x_i)| = O(\epsilon) +  O(h^p/\epsilon)\label{eq:claim 7},
\end{align} 
and for the nonlinear peridynamic force
\begin{align}
\sup_{i\in K} |-\del  PD^\epsilon(\lininter{u})(x_i) - \bbC u_{xx}(x_i)| = O(\epsilon) +  O(h^p/\epsilon)\label{eq:claim 7nonlinear}.
\end{align} 

Let $\bar{p} = \max \{p+1, 4\}$. In case of linear peridynamics and $u\in C^{\bar{p}}(D)$ such that $\bar{p}^{\text{th}}$ derivative of $u$ is bounded, we have
\begin{align}
\sup_{i\in K} |-\del  PD^\epsilon_{l}(\lininter{u})(x_i) - \bbC u_{xx}(x_i)| = O(\epsilon^2) +  O(h^p/\epsilon)\label{eq:claim 8}.
\end{align} 
\end{proposition}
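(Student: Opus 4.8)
The plan is to insert the exact (non-interpolated) nonlocal force as an intermediate term and then combine the two propositions already proved, so the whole statement reduces to a triangle inequality. Concretely, for each node $x_i$ I would write
\begin{align*}
-\del PD^\epsilon_{l}(\lininter{u})(x_i) - \bbC u_{xx}(x_i) = \bigl[-\del PD^\epsilon_{l}(\lininter{u})(x_i) + \del PD^\epsilon_{l}(u)(x_i)\bigr] + \bigl[-\del PD^\epsilon_{l}(u)(x_i) - \bbC u_{xx}(x_i)\bigr],
\end{align*}
together with the analogous identity with $PD^\epsilon_{l}$ replaced by $PD^\epsilon$ for the nonlinear force. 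Taking $\sup_{i\in K}$ and using the triangle inequality reduces the proof to bounding the two bracketed terms separately.

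For the first bracket, the hypothesis $u\in C^{p+1}(D)$ with bounded $(p+1)$-st derivative is exactly what \cref{prop:3.3} requires, so \cref{eq:claim 6} gives $\sup_{i\in K}|\del PD^\epsilon_{l}(\lininter{u})(x_i) - \del PD^\epsilon_{l}(u)(x_i)| = O(h^p/\epsilon)$, and \cref{eq:claim 6nonlin} gives the same order for the nonlinear force. For the second bracket I would invoke \cref{prop:2.1}: for $p\ge 2$ one has $C^{p+1}(D)\subset C^3(D)$ (the case $p=1$ being \cref{prop:3.2}), so \cref{eq:linear elastic force diff} yields $\sup_{i\in K}|-\del PD^\epsilon_{l}(u)(x_i) - \bbC u_{xx}(x_i)| = O(\epsilon)$, while \cref{eq:nonlinear elastic force diff} supplies the matching $O(\epsilon)$ bound for the nonlinear force. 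Adding the two estimates proves \cref{eq:claim 7} and \cref{eq:claim 7nonlinear}.

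For the sharpened estimate \cref{eq:claim 8} I would set $\bar p = \max\{p+1,4\}$, so that $u\in C^{\bar p}(D)\subset C^4(D)$ with bounded fourth derivative. Then the refined bound \cref{eq:linear to elasto 1} of \cref{prop:2.1} applies and upgrades the second bracket to $O(\epsilon^2)$ for the linearized force, whereas $\bar p\ge p+1$ keeps the first bracket at $O(h^p/\epsilon)$ via \cref{prop:3.3}; summing gives $O(\epsilon^2)+O(h^p/\epsilon)$. I would stress that this improvement is intrinsically limited to the linearized model: \cref{prop:2.1} controls the nonlinear-versus-linearized force difference \cref{eq:nonlinear linear force diff} only at order $O(\epsilon)$, the cubic term in the Taylor expansion of $r\mapsto f'(r)$ obstructing any further cancellation, so the nonlinear force cannot be driven below $O(\epsilon)$ by this route.

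The hard part here is essentially nonexistent: the argument is pure bookkeeping once \cref{prop:2.1} and \cref{prop:3.3} are in hand. The only point requiring care — and the reason the second part is phrased in terms of $\bar p=\max\{p+1,4\}$ — is to make the assumed regularity of $u$ strong enough to feed both cited results at once: the $C^{p+1}$ demand of \cref{prop:3.3} and the $C^3$ (resp.\ $C^4$) demand of \cref{prop:2.1}.
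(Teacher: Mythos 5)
Your proposal is correct and follows essentially the same route as the paper, which obtains \cref{prop:3.4} by combining \cref{prop:2.1} and \cref{prop:3.3} through the triangle inequality, with the $O(\epsilon^2)$ refinement for the linearized force coming from \cref{eq:linear to elasto 1} under the $C^{\bar{p}}$ hypothesis. Your remark on the regularity bookkeeping (the $p=1$ case needing $C^3$ rather than $C^{p+1}=C^2$) matches the paper's own caveat following the proposition.
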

{\vskip 2mm}

For $p=1$, we need $u \in C^3(D)$ (see \cref{prop:3.2}). The oultines of proofs are provided in \cref{s:proof}.

\section{The central difference scheme and stability analysis}\label{s:time discr}
In this section, we consider the central difference time discretization of the semi-discrete peridynamic equation \cref{eq:fe approx}. 
We recover a new the stability condition for the linearized peridynamic equation, see \cref{eq: almost CFL}. An explicit stability condition relating $\Delta t$ to $h$  is obtained in terms of the linearized peridynamic material parameters.  It is similar to the standard CFL condition for central difference approximation of 1-d wave equation. 
We point out that the stability of the linearized peridynamic solution can imply the stability of nonlinear peridynamic solution. This implication is physically reasonable provided that the acceleration and body force are sufficiently small and so that one can approximate nonlinear peridynamics by its linearization. 

Let $\Delta t$ be the time step and the field $u(t)$ at time step $k\Delta t $ is denoted by $u^k$. To illustrate ideas we will assume $\rho =1$. For the linearized peridynamics we characterize the matrix $A$ associated with the spatial discretization \cref{eq:fe lin matrix form}. We introduce a special class of matrices.

\noindent {\bf Definition.} An M-matrix has negative off diagonal elements $m_{ij}$, $i\not=j$, and the diagonal elements satisfy $m_{ii} \geq \sum_{j\neq i} m_{ij}$ for all $i$. 

The stability of the numerical scheme is based on the following property of $A$. 
{\vskip 2mm}
\begin{lemma}[Properties of the $A$ matrix]\label{lem:4.1}
For linear interpolations, the square matrix $-A$ of size $|K| \times |K|$ is a Stieltjes matrix, i.e. it is a nonsingular symmetric M-matrix. Therefore, the eigenvalues of $-A$ of is real and positive. 
\end{lemma}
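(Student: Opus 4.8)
The plan is to verify the three defining properties of a Stieltjes matrix for $-A$ in turn: (i) symmetry, (ii) the M-matrix sign/row-sum structure, and (iii) nonsingularity; positivity and reality of the eigenvalues then follow from standard facts about symmetric M-matrices. First I would establish symmetry of $\bar a_{ij}$. From \cref{eq:def matrix A}, $\bar a_{ij}$ is a weighted integral of $\phi_j(y)$ against the even kernel $J(|y-x_i|/\epsilon)/|y-x_i|$ over $[x_i-\epsilon,x_i+\epsilon]$. Using the translation structure of the uniform mesh, $\phi_j(y) = \phi_0(y - x_j)$ and $\phi_i(y) = \phi_0(y-x_i)$, and the change of variables $y \mapsto x_i + x_j - y$ (a reflection about the midpoint of $x_i$ and $x_j$), one checks that $\bar a_{ij} = \bar a_{ji}$ because the kernel depends only on $|y - x_i|$ and, after reflection, only on $|y - x_j|$, while the hat function $\phi_0$ is even. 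Since the off-diagonal entries $a_{ij}=\bar a_{ij}$ are symmetric and the diagonal entries are defined in \cref{eq: matrixa} as $a_{ii} = -\sum_{k\neq i} \bar a_{ik}$, the full matrix $A$ (and hence $-A$) is symmetric.

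Next I would check the sign pattern. For $j\neq i$, $\bar a_{ij} = \frac{2f'(0)}{\epsilon^2}\int \phi_j(y) \, J(|y-x_i|/\epsilon)/|y-x_i| \, dy \ge 0$ since $f'(0)>0$, $J\ge 0$, and $\phi_j\ge 0$; so the off-diagonal entries of $-A$ are $\le 0$ (strictly negative for the finitely many $j$ with $|x_j - x_i| \le \epsilon + h$, which is all that matters). For the row sums: by \cref{eq: matrixa}, for each row $i$ of $A$ we have $\sum_{j\in K\cup K_\epsilon} a_{ij} = 0$, hence the restricted matrix indexed by $K$ satisfies $\sum_{j\in K} (-A)_{ij} = \sum_{j\in K_\epsilon} \bar a_{ij} \ge 0$, with the diagonal of $-A$ equal to $\sum_{k\neq i}\bar a_{ik} > 0$. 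This gives exactly the weak diagonal-dominance / M-matrix condition $m_{ii}\ge \sum_{j\neq i} m_{ij}$, and in fact strict dominance for the rows $i\in K$ whose nonlocal neighborhood reaches into $K_\epsilon$ (i.e. nodes within distance $\epsilon$ of $\partial D$), because there the partition-of-unity identity $\sum_j \phi_j \equiv 1$ forces $\sum_{j\in K_\epsilon}\bar a_{ij}>0$.

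Finally, nonsingularity: $-A$ is a symmetric matrix with positive diagonal, nonpositive off-diagonals, weak diagonal dominance in every row, and strict diagonal dominance in at least one row, and the ``dominance graph'' is irreducible (on the uniform mesh every node in $K$ is connected through a chain of nonlocal bonds to a strictly dominant boundary-adjacent row). By the standard irreducibly-diagonally-dominant criterion, $-A$ is nonsingular; being symmetric with nonpositive off-diagonals and a nonnegative inverse, it is a Stieltjes matrix. Reality of the spectrum is immediate from symmetry, and since $-A$ is a nonsingular symmetric M-matrix it is positive definite, so all eigenvalues are real and strictly positive. The main obstacle I anticipate is the bookkeeping in the symmetry argument — one must handle carefully the overlap of the support of $\phi_j$ with the integration window $[x_i-\epsilon,x_i+\epsilon]$ and confirm the reflection $y\mapsto x_i+x_j-y$ maps this window and the shifted hat function correctly — together with making the irreducibility/strict-dominance claim precise (ensuring $\epsilon = mh$ with $m\ge 1$ so that interior nodes genuinely couple, through a bond chain, to the boundary layer). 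Everything else is a routine sign check.
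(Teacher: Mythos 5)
Your proof is correct and takes essentially the same route as the paper: both establish the Z-matrix sign pattern from $J,\phi_j\ge 0$ and $f'(0)>0$, derive weak diagonal dominance (strict in rows that couple to the boundary layer $K_\epsilon$) from the zero-row-sum construction of $a_{ii}$ once the boundary columns are dropped, note symmetry of $\bar a_{ij}$, and then invoke a standard nonsingularity criterion for diagonally dominant matrices with this sign structure. The only differences are cosmetic: you justify nonsingularity via irreducible diagonal dominance while the paper cites condition $M_{37}$ of Theorem 2.3 in Berman--Plemmons, and you spell out the reflection change of variables for the symmetry of $\bar a_{ij}$, which the paper simply calls a straightforward consequence of its defining formula.
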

{\vskip 2mm}
\begin{proof}
$-A$ is clearly M-matrix as its off-diagonal terms are negative, and diagonal terms satisfy $-a_{ii} \geq \sum_{j\neq i} -a_{ij}$ for all $i$. To prove that a M matrix is nonsingular, we apply Theorem 2.3 in [\cite{MALa-Berman}, Chapter 6]. 
From the definition of $-A$ we find that
\begin{align*}
-a_{ii}=\sum_{i\not=j}{-a_{ij}},\,\,\,\,i=1,\ldots,n,,\qquad -a_{ii}>\sum_{j=1}^{i-1}{-a_{ij}}, \,\,\,\,i=2,\ldots,n,
\end{align*}
and this is easily seen to be condition $M_{37}$ of Theorem 2.3 and we conclude that $-A$ is nonsingular.
The symmetry  of $-A$ is a straight forward consequence of the formula \cref{eq:def matrix A}. 
\end{proof}

\textbf{Central difference time discretization: }For $\rho=1$, the spatially discretized evolution equations for linearized peridynamics given by \cref{eq:fe lin matrix form} is written
\begin{align*}
\ddot{U}_{l,h}(t) &= A U_{l,h}(t) + B(t).
\end{align*}
We now additionally discretize in time using the central difference scheme. Let $U^k_{l,h} := \{\hat{u}^{\epsilon,k}_{l,i}\}_{i\in K}$ denote the discrete displacement field at time step $k$. Here we use the subscript ``$l$'' for linear peridynamic and superscript ``$\epsilon$'' to highlight that the solution corresponds to size of horizon $\epsilon$. In what follows, we will assume no body force and the dynamics is driven by the initial conditions. 
Since we have the zero Dirichlet boundary condition, we know the displacement at nodes $i \in K_\epsilon$ is zero for all time steps. We assume  $k \leq T /\Delta t$, and  the horizon is given by $\epsilon=mh/2$ where $m$ is a positive integer.
The discretized dynamics is given by the solution $\{U^k_{l,h}\}$ of the following equation
\begin{align*}
\frac{U^{k+1}_{l,h}-2 U^k_{l,h} + U^{k-1}_{l,h}}{\Delta t^2} = A U^k_{l,h},
\end{align*}
or after elementary manipulation
\begin{align}\label{eq:central scheme}
U^{k+1}_{l,h} = - U^{k-1}_{l,h} + (2+\Delta t^2 A) U^k_{l,h}.
\end{align}

\begin{theorem}[Stability criterion for the central difference scheme]\label{thm:4.2}
Recall the elastic constant $\mathbb{C}$ given by \cref{eq:elasto const C}, $f'(0)$ given by \cref{eq:per asymptote}, and  $M=\max_{0<r\leq 1}\{J(r)\}$. Then the central difference scheme \cref{eq:central scheme}, in the absence of body forces, is stable as long as $\Delta t$ satisfies 
\begin{align}\label{eq: almost CFL}
\Delta t \leq \frac{h}{\sqrt{\mathbb{C}+ 2f'(0)\frac{Mh^2}{\epsilon^2}}}.
\end{align}

\end{theorem}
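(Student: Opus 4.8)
The plan is to diagonalize the recursion \cref{eq:central scheme}, read off a spectral stability condition, and then bound the largest eigenvalue of $-A$ by the two material constants appearing in \cref{eq: almost CFL}.

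First I would invoke \cref{lem:4.1}: $-A$ is symmetric with real positive eigenvalues $0<\lambda_{1}\le\dots\le\lambda_{|K|}=:\lambda_{\max}$, hence has an orthonormal eigenbasis. Expressing $U^{k}_{l,h}$ in that basis decouples \cref{eq:central scheme} into $|K|$ scalar three-term recursions $v^{k+1}=(2-\Delta t^{2}\lambda)\,v^{k}-v^{k-1}$, one for each eigenvalue $\lambda$ of $-A$. The characteristic polynomial $z^{2}-(2-\Delta t^{2}\lambda)z+1$ has roots whose product is $1$, so the iteration is non-growing exactly when both roots lie on the unit circle, i.e.\ when $|2-\Delta t^{2}\lambda|\le 2$; since $\lambda>0$ this is equivalent to $\Delta t^{2}\lambda\le 4$. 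Imposing this for every eigenvalue, the scheme is stable as soon as $\Delta t^{2}\le 4/\lambda_{\max}$. (At the borderline $|2-\Delta t^{2}\lambda|=2$ the scalar recursion has a double root $\pm 1$ and grows linearly in $k$; the estimates below are not sharp, so this case is excluded under the strict form of the bound.) It therefore suffices to prove $\lambda_{\max}\le 4\bbC/h^{2}+8f'(0)M/\epsilon^{2}$, since then $4/\lambda_{\max}\ge h^{2}/\big(\bbC+2f'(0)Mh^{2}/\epsilon^{2}\big)$, which is precisely the square of the right-hand side of \cref{eq: almost CFL}.

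To bound $\lambda_{\max}$ I would apply Gershgorin's theorem to $-A$. By \cref{eq: matrixa} the off-diagonal entries of $-A$ equal $-\bar a_{ij}\le 0$, while the diagonal entry $-a_{ii}=\sum_{j\in K\cup K_{\epsilon},\,j\ne i}\bar a_{ij}$ dominates the Gershgorin radius $\sum_{j\in K,\,j\ne i}\bar a_{ij}$ of its row; hence $\lambda_{\max}\le 2\max_{i\in K}\sum_{j\ne i}\bar a_{ij}$. It remains to estimate this row sum. Summing \cref{eq:def matrix A} over $j\ne i$ and using the partition of unity $\sum_{j\ne i}\phi_{j}=1-\phi_{i}$,
\[
\sum_{j\ne i}\bar a_{ij}=\frac{2f'(0)}{\epsilon^{2}}\int_{x_{i}-\epsilon}^{x_{i}+\epsilon}\frac{\big(1-\phi_{i}(y)\big)\,J(|y-x_{i}|/\epsilon)}{|y-x_{i}|}\,dy,
\]
and for linear shape functions $1-\phi_{i}(y)=\min\{1,|y-x_{i}|/h\}$, which exactly cancels the $|y-x_{i}|^{-1}$ singularity at the node. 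I would split the integral at $|y-x_{i}|=h$: on $\{|y-x_{i}|\le h\}$ the integrand equals $J(|y-x_{i}|/\epsilon)/h\le M/h$, so this part is at most $4f'(0)M/\epsilon^{2}$; on $\{h<|y-x_{i}|\le\epsilon\}$ I would use $1-\phi_{i}\le 1$ together with $1/r\le r/h^{2}$ for $r\ge h$ and compare with the identity $\bbC=\frac{2f'(0)}{\epsilon^{2}}\int_{0}^{\epsilon}J(r/\epsilon)\,r\,dr$ coming from \cref{eq:elasto const C}, which bounds this part by $2\bbC/h^{2}$. Adding the two pieces gives $\sum_{j\ne i}\bar a_{ij}\le 2\bbC/h^{2}+4f'(0)M/\epsilon^{2}$ for every $i$, hence $\lambda_{\max}\le 4\bbC/h^{2}+8f'(0)M/\epsilon^{2}$, and the theorem follows from the reduction above.

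The main obstacle is this row-sum estimate: one must use the linear decay of $1-\phi_{i}$ at the node to absorb the kernel singularity, and the split radius $h$ must be chosen so that the near-node piece reproduces exactly the constant $f'(0)M/\epsilon^{2}$ and the far piece, after the comparison $1/r\le r/h^{2}$, reproduces exactly $\bbC/h^{2}$; a coarser split would degrade the constant in \cref{eq: almost CFL}. A secondary point is to verify that the partition-of-unity step and the bound $1-\phi_{i}(y)\le\min\{1,|y-x_{i}|/h\}$ remain valid (or hold as inequalities) for nodes adjacent to the outer edge of $K\cup K_{\epsilon}$, so that the estimate on the row sum is uniform over $i\in K$.
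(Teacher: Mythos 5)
Your proposal is correct and follows essentially the same route as the paper: reduce \cref{eq:central scheme} to scalar recursions via the spectrum of $-A$ (\cref{lem:4.1}), obtain $\Delta t^2\lambda_{\max}\leq 4$ from the unit-modulus condition on the characteristic roots, bound $\lambda_{\max}$ by twice the Gershgorin row sum, and estimate that row sum by splitting the kernel integral at $|y-x_i|=h$ so that the partition of unity cancels the singularity near the node (yielding $4f'(0)M/\epsilon^2$) and the comparison $1/r\leq r/h^2$ recovers $2\bbC/h^2$ from \cref{eq:elasto const C}. Your explicit diagonalization and the remark on the borderline double root are slightly more careful than the paper's mode-substitution argument, but the substance is identical.
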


{\vskip 2mm}

\noindent {\bf Remark.} The stability condition for the linear elastic wave equation is given by the CFL condition $\Delta t\leq\frac{h}{\sqrt{\mathbb{C}}}$ where $h$ gives the distance between mesh points. 

\begin{proof}
Let $(\gamma_i,\bnu_i)$ be an eigenpair of $A$. Let $\lambda_i = -\gamma_i$, then $\lambda_i >0$ and let $\lambda= \max_i\{\lambda_i\}$. Substitute $U^k_{l,h} = \xi^k \bnu$, where $\xi$ is some real number and by $\xi^k$ we mean the $k^{\text{th}}$ power of $\xi$, into \cref{eq:central scheme}, to obtain the characteristic equation 
\begin{align*}
\xi^{2} - 2 \theta \xi + 1 = 0.
\end{align*}
where $\theta = 1 - 1/2 \lambda_i \Delta t^2$. The solution of the quadratic equation gives two roots: $\delta_1 = \theta + \sqrt{\theta^2-1}$ and $\delta_2 = \theta - \sqrt{\theta^2 - 1}$. We need $\abs{\delta} \leq 1$ for stability. Since $\delta_1 \delta_2 = 1$, the only possibility is when $|\delta_1|=|\delta_2|=1$. This is satisfied for all eigenmodes when 
\begin{align*}
&|\theta| \leq 1 \notag \\
\Rightarrow & \Delta t \leq \dfrac{2}{\sqrt{\lambda}}\leq \dfrac{2}{\sqrt{\lambda_i}}.
\end{align*}
A lower estimate on $1/\sqrt{\lambda}$ follows from Gershgorin's circle theorem:
\begin{theorem}
Any eigenvalue of $A$ lies inside at least one of the disks
\begin{equation}
|\gamma-a_{ii}|<\sum_{i\not= j}|a_{ij}|.
\label{G}
\end{equation}
\end{theorem}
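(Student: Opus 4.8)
The plan is to prove this by the classical dominant-eigenvector-component argument. Let $\gamma$ be any eigenvalue of $A$ and let $\bnu \neq \bzero$ be an associated eigenvector, so that $A\bnu = \gamma\bnu$. Writing this out row by row gives, for each index $i$,
\begin{align*}
\sum_{j} a_{ij}\nu_j = \gamma\nu_i,
\end{align*}
where $\nu_j$ denotes the $j$-th component of $\bnu$. The single idea that drives the whole proof is to localize attention at the index where the eigenvector is largest in magnitude, since there the off-diagonal contributions are most tightly controlled.

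Concretely, I would choose an index $i$ for which $|\nu_i| = \max_j |\nu_j|$; because $\bnu \neq \bzero$ this maximum is strictly positive, so $|\nu_i| > 0$. I would then split off the diagonal term in the $i$-th row equation and rearrange it as
\begin{align*}
(\gamma - a_{ii})\nu_i = \sum_{j\neq i} a_{ij}\nu_j.
\end{align*}
Taking absolute values of both sides, applying the triangle inequality to the right-hand sum, and then bounding each $|\nu_j|$ by the maximal value $|\nu_i|$ yields
\begin{align*}
|\gamma - a_{ii}|\,|\nu_i| \leq \sum_{j\neq i} |a_{ij}|\,|\nu_j| \leq \left(\sum_{j\neq i} |a_{ij}|\right)|\nu_i|.
\end{align*}
Dividing through by the strictly positive quantity $|\nu_i|$ gives $|\gamma - a_{ii}| \leq \sum_{j\neq i}|a_{ij}|$, which places $\gamma$ in the claimed disk centered at $a_{ii}$ with radius $\sum_{j\neq i}|a_{ij}|$. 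Since the index $i$ depends on the chosen eigenvector, this exhibits at least one disk containing $\gamma$, which is exactly the assertion that every eigenvalue lies inside at least one of the disks.

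There is no genuine obstacle here, the entire content of the argument is the choice of the dominant component; everything else is the triangle inequality and a division. The one point I would flag is a minor discrepancy between the statement and the natural conclusion: the argument produces the closed-disk inequality with $\leq$ rather than the strict $<$ written in \cref{G}. This is harmless for the intended application, since membership in the closed disk is all that is needed to obtain the eigenvalue bound, and one may simply read \cref{G} as referring to the closed Gershgorin disks. I would remark on this so the reader is not misled into expecting a strict bound, which in general fails (for instance when an off-diagonal row sum is attained with equality).
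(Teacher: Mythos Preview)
Your proof is correct and is the standard dominant-component argument for Gershgorin's circle theorem. The paper itself does not supply a proof of this statement at all; it simply quotes Gershgorin's theorem as a known result and applies it, so there is no ``paper's own proof'' to compare against. Your remark that the argument only yields the closed-disk inequality $|\gamma - a_{ii}| \leq \sum_{j\neq i}|a_{ij}|$ rather than the strict inequality written in \cref{G} is well taken and worth noting, since the strict version is indeed false in general; for the application in the paper only the non-strict bound is used.
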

All eigenvalues of $A$ lie on the negative real axis and we provide an upper estimate on the largest magnitude of the eigenvalues depending only on the mesh size $h$ given by the distance between interpolation points and the horizon $\epsilon=mh$. For this case, it follows from Equation \cref{G} and \cref{eq: matrixa} that
\begin{equation*}
\lambda<2\sum_{i\not= j}\overline{a}_{ij},
\end{equation*}
Writing out the sum and using the definition of the interpolating functions and their partition of unity properties we get
\begin{align*}
\sum_{i\not=j}\overline{a}_{ij}=\frac{2f'(0)}{\epsilon^2}\int_{x_{i-1}}^{x_{i+1}}\frac{1}{h}J(|y-x_i|/\epsilon)\,dy \notag \\
+\frac{2f'(0)}{\epsilon^2}\int_{x_{i}-\epsilon}^{x_{i-1}}\frac{J(|y-x_i|/\epsilon)}{|y-x_i|}\,dy \notag \\
+\frac{2f'(0)}{\epsilon^2}\int_{x_{i+1}}^{x_{i}+\epsilon}\frac{J(|y-x_i|/\epsilon)}{|y-x_i|}\,dy.
\end{align*}
Here we make use of the identities
\begin{align*}
1&=\sum_{j\in I^+}\phi^j(y), y\in[x_{i+1},x_i+\epsilon],\\
1&=\sum_{j\in I^-}\phi^j(y), y\in[x_{i}-\epsilon,x_{i-1}],
\end{align*}
where $I^+=\{j\,:x_j\in[x_{i+1},x_i+\epsilon]\}$ and $I^-=\{j\,:x_j\in[x_{i}-\epsilon,x_{i-1}]\}$ .
For $y<x_{i-1}$ and $x_{i+1}<y$ we have $h<|y-x_i|$ and $1<|y-x_i|/h$ and we have the estimate
\begin{align*}
\sum_{i\not=j}\overline{a}_{ij} &\leq 2\frac{\mathbb{C}}{h^2}+\frac{2f'(0)}{h\epsilon^2}\int_{x_{i-1}}^{x_{i+1}}J(|y-x_i|/\epsilon)\,dy \notag \\
&\leq 2\frac{\mathbb{C}}{h^2}+\frac{4}{\epsilon^2}f'(0)M,
\end{align*}
and a lower bound now follows on $1/\sqrt{\lambda}$. Simple manipulation then delivers \cref{eq: almost CFL}. 
\end{proof}
{\vskip 2mm}

\section{Numerical simulation}\label{s:numerical}
In this section we present numerical simulations that independently corroborate the theoretical bounds on the consestency error given in section \cref{section consistency}. We start in  \cref{ss:nondim} and pose the non-dimensional initial boundary value problem. We then perform a numerical study of the $h$-convergence in \cref{ss: conoverge} and convergence with respect to the ratio $h/\epsilon$ in \cref{ss: mconverge}.  We compare the numerical simulations for the nonlinear and linear nonlocal models with local linear elastodynamics.

\subsection{Nondimensional peridynamic equation}\label{ss:nondim}
Let $[0,L]$ be the bar with length $L$ in meters. Let $[0,T]$ be the time domain in units of seconds. Given a dimensionless influence function $J(r), r\in [0,1]$, the bond force $f'(0)$ is in the units $N/m^2$, and the density $\rho$ in unit $kg/m^3$, the wave velocity in an equivalent  linear elastic medium can be determined by
\begin{align*}
\nu_0 = \sqrt{f'(0) M/\rho}, \quad M := 2 \int_0^1J(r) r dr .
\end{align*}
We introduce the time scale $T_0:= L/\nu_0$. Then a wave in the elastic media with elastic constant $\bbC= Mf'(0)$ requires $T_0$ seconds to reach from one end of the bar to the other end.

We let $\bar{x} = x/L$ for $x\in [0,L]$, and $\bar{t} = t/T_0$. We define non-dimensional solution $\bar{u}(\bar{x}, \bar{t}) := u(L\bar{x}, T_0 \bar{t})/L$. Let $\bar{\epsilon} := \epsilon/L$ be nondimensional size of horizon. Then $\bar{u}$ satisfies
\begin{align*}
\ddot{\bar{u}}(\bar{t}, \bar{x}) = \dfrac{2}{\bar{\epsilon}^2} \int_{\bar{x} - \bar{\epsilon}}^{\bar{x} + \bar{\epsilon}} \bar{f}'(|\bar{y} - \bar{x}| \bar{S}^2) \bar{S}(\bar{y}, \bar{x}) J(|\bar{y} - \bar{x}|/\bar{\epsilon}) d\bar{y} + \bar{b}(\bar{t}, \bar{x}),
\end{align*} 
where $\bar{S}(\bar{y},\bar{x}; \bar{u}(\bar{t})) = (\bar{u}(\bar{t}, \bar{y}) - \bar{u}(\bar{t}, \bar{x}))/|\bar{y} - \bar{x}|$, $\bar{f}'(r) = \dfrac{L}{\bbC} f'(Lr)$, and $\bar{b}(\bar{t}, \bar{x}) = \dfrac{L}{\bbC} b(T_0\bar{t}, L \bar{x})$.
The time interval $T_0$ for a given $E=f'(0)$ is given by $T_0 = L \sqrt{\rho/EM}$ and $u(t) = L \bar{u}(t/T_0)$.

In the following studies we choose the influence function to be $J(|x|) = 2|x|\exp(-|x|^2/\alpha)$ with $\alpha= 0.4$. 
The nonlinear potential function $f$ is taken to be  $f(|x|{S}^2) = C (1 - \exp[- b |x|{S}^2])$. We let $b = 1$ and $f'(0)= Cb = C = 1/M$, where $M= 2\int_0^1 J(r)dr$. This gives $T_0 = 1$. The body force is set to zero, i.e. $b=0$. All numerical results shown in this article will correspond to above choice of $J$, $b$, and $f$.


\subsection{$h$-convergence} \label{ss: conoverge}
We study the the rate of convergence as seen in the simulations for two different choices of initial conditions.
In first problem, we consider the Gaussian pulse as the initial condition given by: $u_0(x) = a \exp[-(0.5-x)^2/\beta], v_0(x)= 0.0$ with $a= 0.005$ and $\beta= 0.00001$. The time interval is $[0,1.7]$ and the time step is $\Delta t= 0.00001$. We fix $\epsilon$ to $0.1$, and consider the mesh sizes $h = \{\epsilon/10, \epsilon/100, \epsilon/1000\}$. 
For the second problem, we consider the double Gaussian curve as initial condition: $u_0(x) = a \exp[-(0.25-x)^2/\beta] + a \exp[-(0.75-x)^2/\beta], v_0(x)= 0.0$ with $a= 0.005$ and $\beta= 0.00001$. The time interval for the second problem is $[0,0.5]$ and the time step is $\Delta t= 0.000005$. Here we consider a smaller horizon $\epsilon= 0.01$, and solve for the three mesh sizes $h =\{\epsilon/100, \epsilon/200, \epsilon/400 \}$.

Using the approximate solutions corresponding to three different mesh sizes, we can easily compute the dependence of the error with respect to mesh size $h$. Let $u_1,u_2,u_3$ correspond to meshes of size  $h_1,h_2,h_3$, and let $u$ be the exact solution. We write the error as $||u_h - u|| =C h^\alpha$ for some constant $C$ and $\alpha>0$, and fix the ratio of mesh size  $h_1/h_2 = h_2/h_3 = r$, to get
\begin{align*}
\log( ||u_1 - u_2||) &= C + \alpha \log h_2, \\
\log( ||u_2 - u_3||) &= C + \alpha \log h_3.
\end{align*}
Then the rate of convergence $\alpha$ is $$\dfrac{\log( ||u_1 - u_2||) - \log( ||u_2 - u_3||)}{\log(r)}.$$

In \cref{table:result problem 1} and \cref{table:result problem 2}, we list lower bound on the rate of convergence for different times in the evolution. The rate of convergence for the simulation is seen to depend on the time. We also note that the rate of convergence for the linear peridynamic solution is very close to that of the nonlinear peridynamic solution and both convergence rates lie above the theoretically predicted convergence rate for the $L^2$ error given by $\alpha=1$. 

\begin{table} 
\caption{Convergence result for problem 1. Superscript 1 refers to $L^2$ norm and 2 refers to $\sup$ norm. NPD refers to nonlinear peridynamic and LPD refers to linear peridynamic. Max time step is $170000$.}
\centering
\begin{tabular}{lllll} \toprule
Time step & $\text{LPD}^1$ & $\text{NPD}^1$ & $\text{LPD}^2$ & $\text{NPD}^2$ \\ \midrule 
6000  & 1.6416 & 1.6419 & 1.4204 & 1.4204 \\
51500   & 1.3098 & 1.3106 & 1.3312 & 1.3331 \\
104000   & 1.1504 & 1.1482 & 1.5155 & 1.5557 \\
147000   & 1.1364 & 1.1262 & 1.6027 & 1.5215 \\
165000   & 1.2611 & 1.2632 & 1.5496 & 1.6055 \\
\bottomrule
\end{tabular}
\label{table:result problem 1}
\end{table}

\begin{table} 
\caption{Convergence result for problem 2. Superscript 1 refers to $L^2$ norm and 2 refers to $\sup$ norm. NPD refers to nonlinear peridynamic and LPD refers to linear peridynamic. Max time step is $10^5$.}
\centering
\begin{tabular}{lllll} \toprule
Time step & $\text{LPD}^1$ & $\text{NPD}^1$ & $\text{LPD}^2$ & $\text{NPD}^2$ \\ \midrule 
2000   & 1.4498 & 1.4504 & 1.2546 & 1.2547 \\
54000   & 1.3718 & 1.3707 & 1.6903 & 1.6908 \\
96000   & 1.3735 & 1.3719 & 1.3753 & 1.3816 \\
\bottomrule
\end{tabular}
\label{table:result problem 2}
\end{table}

\subsection{Convergence with respect to $h$ and $h/\epsilon$}\label{ss: mconverge}
We consider the limit of the peridynamic solution as $\epsilon \to 0$. The initial displacement is  $u_0(x) = a \exp[-(0.5-x)^2/\beta], v_0(x)= 0.0$ with $a= 0.005$ and $\beta= 0.00001$. The time domain is taken to be $[0,0.1]$ and the time step is $\Delta t= 0.0000005$. We fix the ratio $\epsilon/h = 100$, and solve the problem for three different peridynamic horizons given by $\epsilon= 0.0016$, $\epsilon=0.0008$, and $\epsilon=0.0004$. As before we assume a convergence error $\Vert u^\epsilon-u\Vert\leq C \epsilon^\alpha$. The rate is of convergence in the simulations is measured by
$$\dfrac{\log(||u^{\epsilon_1} - u^{\epsilon_2}||) - \log(||u^{\epsilon_2} - u^{\epsilon_3}||)}{\log(\epsilon_2) - \log(\epsilon_3)}.$$
In \cref{table:eps converg test 20}, we record the convergence rate with respect to $\epsilon$  for different times in the evolution.

\begin{table}
\caption{Rate of convergence $\frac{\log(||u^{\epsilon_1} - u^{\epsilon_2}||) - \log(||u^{\epsilon_2} - u^{\epsilon_3}||)}{\log(\epsilon_2) - \log(\epsilon_3)}$.}
\centering
{\begin{tabular}{@{}ccccc@{}} \toprule
\multicolumn{1}{c}{Time step} & \multicolumn{2}{c}{Conv. of LPD} & \multicolumn{2}{c}{Conv. of NPD} \\
& $L^2$ & $\sup$ & $L^2$ & $\sup$\\ \midrule 
  \cmidrule{1-1}\cmidrule{2-3}\cmidrule{4-5}
2000   & 1.9052 & 1.5556 & 1.9052 & 1.5556 \\
50000  & 1.7916 & 1.6275 & 1.7916 & 1.6275 \\
100000 & 1.718  & 1.449  & 1.718  & 1.449 \\
150000 & 1.6298 & 1.2688 & 1.6298 & 1.2688 \\
200000 & 1.5388 & 1.1086 & 1.5388 & 1.1086 \\
\bottomrule
\end{tabular}}
\label{table:eps converg test 20}
\end{table}

\begin{figure}[ht]
\centering
\includegraphics[scale=0.4]{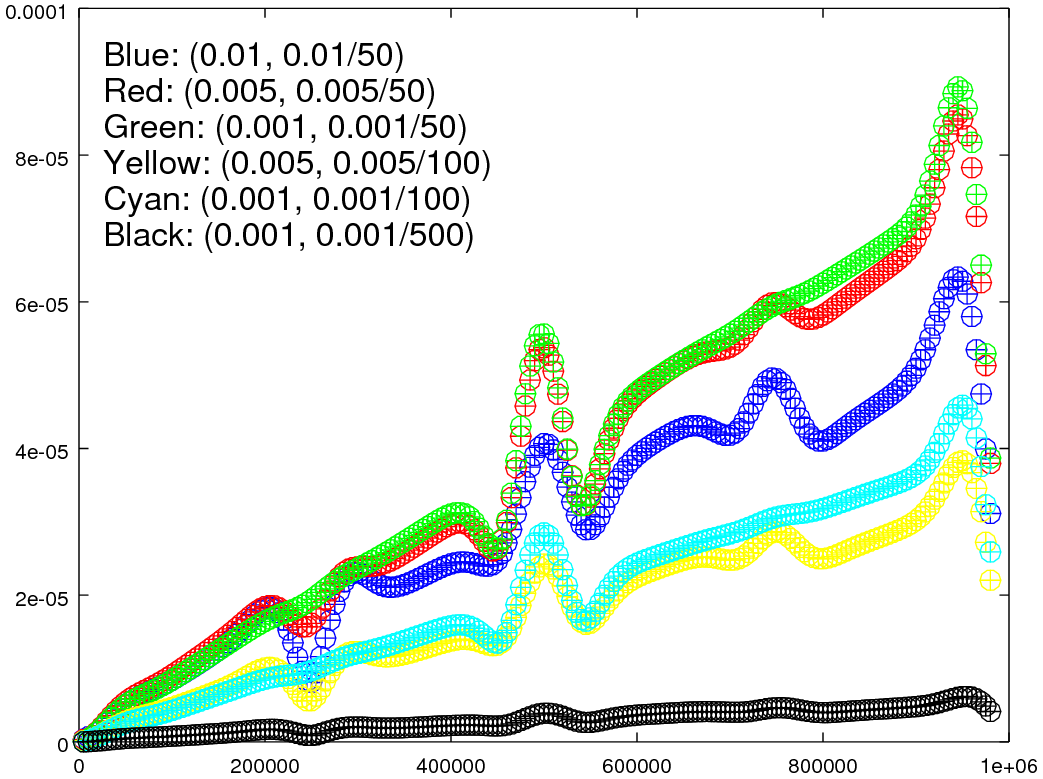}
\caption{Plot of $||u_{peri} - u_{elasto}||_{L^2}$ at different time steps. Arguments inside the bracket corresponds to $(\epsilon, h)$. ``+'' corresponds to the linear peridynamics and ``o'' corresponds to nonlinear peridynamics. For $(\epsilon= 0.005, h= \epsilon/100)$ (Yellow curve), the error $||u_{peri} - u_{elasto}||$ is smaller compared to the error for $(\epsilon= 0.01, h=\epsilon/50)$ (Blue curve), whereas for the same $\epsilon=0.005$ but with $h= \epsilon/50$ (Red curve), error is in fact higher than the error corresponding to $(\epsilon= 0.01, h=\epsilon/50)$ (Blue curve). To further demonstrate the dependence of $||u_{peri} - u_{elasto}||$ on $h/\epsilon$, the solution corresponding to $(\epsilon= 0.001, h =\epsilon/100)$ (Cyan curve) lies above the Yellow curve. However, when the ratio $\epsilon/h$ is increased to $500$ (Back curve), i.e. for $(\epsilon= 0.001, h= \epsilon/500)$, we see that the Black curve is lower than the Yellow curve. Also note that the error plot corresponding to linear and nonlinear peridynamics are almost same (``+'' and ``o'' overlap in each curve).}\label{fig:error plot m converg}
\end{figure}

\textbf{Comparison with the elastodynamic solution: }Next we compare the numerical solutions of elastodynamics, linear peridynamics, and nonlinear peridynamics. The comparison is made using the common initial data: $u_0(x) = a \exp[-(0.25-x)^2/\beta] + a \exp[-(0.75-x)^2/\beta], v_0(x)= 0.0$ with $a= 0.001$ and $\beta= 0.003$. The time interval for simulation is $[0,1.0]$ and the time step is $\Delta t=  0.000001$. The time interval has been chosen sufficiently large to include the effect of wave reflection off the  boundary. In \cref{fig:error plot m converg}, we plot the error $||u_{peri} - u_{elasto}||$ at each time step. \cref{fig:error plot m converg} validates the fact that error depends on $h/\epsilon$ (see \cref{eq:claim 5nonlinear} and \cref{eq:claim 5}). In \cref{fig:compare solutions}, we plot the solutions at different time steps. 

In \cref{fig:error plot m converg}, we see that error has a jump when $t$ is close to $0.25, 0.5, 0.75, 0.95$. The jump near $t=0.25$ and $t=0.75$ is due to the wave dispersion effect when the wave hits the boundary. The reason for this is that for peridynamic simulations with smaller $\epsilon$ (compare Green, Cyan, and Black curve in \cref{fig:error plot m converg} with that of large $\epsilon$ in Blue, Red, and Yellow curve), the jump in error near $t=0.25$ and $t=0.75$ goes away irrespective of the $h/\epsilon$ ratio. As for the jump in error near $t=0.5$ and $t=0.95$, we look at the simulation and find that close to time $t=0.5,0.95$, there is interaction an between two Gaussian pulses traveling towards each other. This interaction is well captured by peridynamic solution when $\epsilon$ is small along with a small ratio  $h/\epsilon$. The Cyan curve corresponds to smaller $\epsilon$ as compared to the Blue curve. But the jump near $t=0.5$ and $t=0.95$ does not improve much in Cyan curve. However, when we consider the finer mesh used in the simulation corresponding to the Black curve with $\epsilon$ same as that of Cyan curve, the jump is greatly reduced. 

\begin{figure}[ht]
\centering
\subfloat[$t= 0.25$.]{\includegraphics[width=0.3\textwidth]{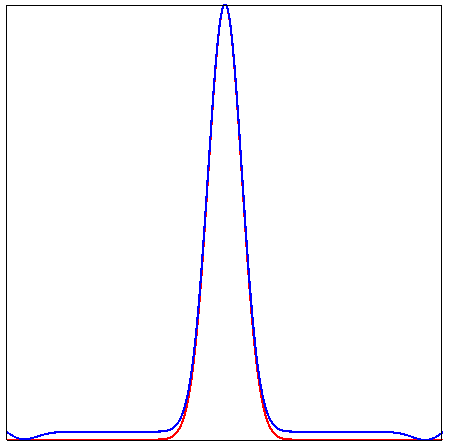}} \hspace{1cm}
\subfloat[$t= 0.505$.]{\includegraphics[width=0.3\textwidth]{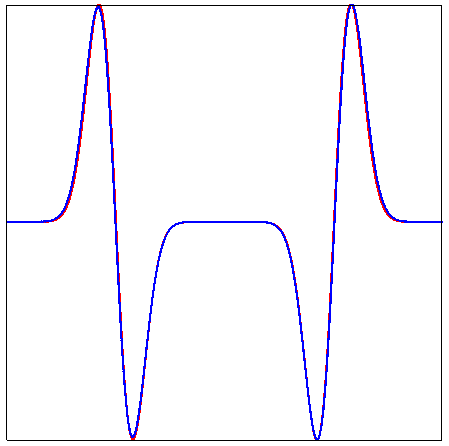}}\\
\subfloat[$t= 0.75$.]{\includegraphics[width=0.3\textwidth]{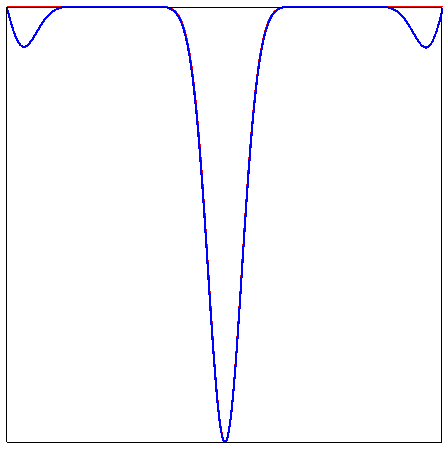}}\hspace{1cm}
\subfloat[$t= 1.0$.]{\includegraphics[width=0.3\textwidth]{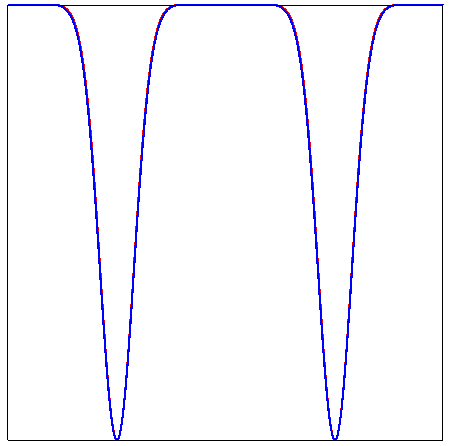}}

\caption{The elastodynamic solution is shown in Red, linear peridynamics in Green, and nonlinear peridynamics in Blue. Simulation shows that solutions of linear and nonlinear peridynamics are nearly identical. The Green curve is hidden beneath Blue curve. The elastodynamic solution corresponds to mesh size $h=0.00001$ whereas the peridynamic solution corresponds to $\epsilon=0.005$ and $h= \epsilon/100$. Plots above are normalized so that the displacement lies within $[0,1]$.}
  \label{fig:compare solutions}
\end{figure}

The difference between the red and blue curves in \cref{fig:compare solutions} at $t=0.25$ and $t=0.75$ is due to the presence of wave dispersion in the nonlocal model and reflection of the pulses by the boundary as described in \cref{fig:error plot m converg}. The difference in red and blue curves at $t=0.5$ and $t=1.0$ is due to the interaction between the pulses as they approach each other and associated approximation  error for the nonlocal model described in \cref{fig:error plot m converg}.

%
%
\textbf{Comparison between nonlinear and linear peridynamic solutions: }In \cref{prop:2.1}, we have shown that difference between the nonlinear and linearized peridynamic force is controlled by $\epsilon$ when solution is smooth. Therefore, we would expect that as the size of horizon gets smaller the difference between approximate solution of linear and nonlinear peridynamics will get smaller. Let $u^1_l, u^2_l$ be the linear peridynamic solution and $u^1, u^2$ be the nonlinear peridynamic solution. ``1'' corresponds to $(\epsilon_1= 0.01, h_1= \epsilon/50)$ and ``2'' $(\epsilon_2= 0.005, h_2= \epsilon/100)$. \cref{fig:LPD NPD comparison} shows the plot of slope $\frac{\log(||u^1 - u^1_l||_{L^2}) - \log(||u^2 - u^2_l||_{L^2})}{\log(\epsilon_1) - \log(\epsilon_2)}$ at different time steps. We see from the figure that the rate of convergence is very consistent with respect to time and is very close to expected value $1$. 

\begin{figure}[ht]
\centering
\includegraphics[scale=0.35]{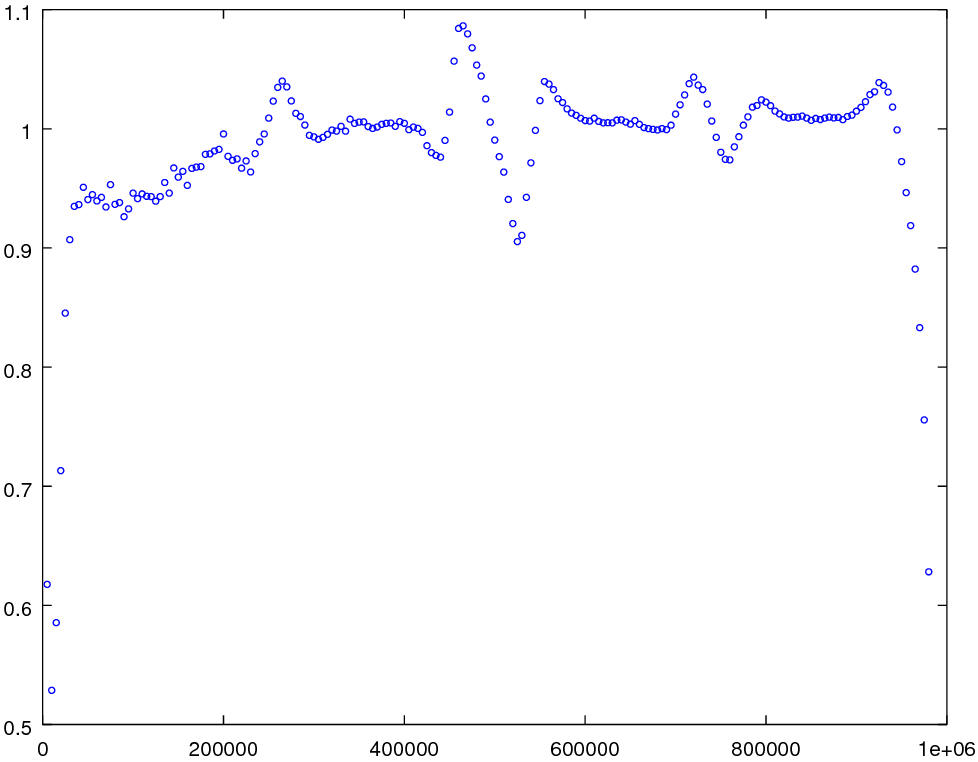}
\caption{Slope of $\log(||u_{NPD} - u_{LPD}||_{L^2})$ with respect to $\log(\epsilon)$ at different time steps from $k=0$ to $k=10^6$.}\label{fig:LPD NPD comparison}
\end{figure}

\section{Convergence of nonlinear nonlocal models to local elastodynamics in dimensions $2$ and $3$}\label{ss: higherD}

We display the convergence of the nonlinear nonlocal model to elastodynamics in dimensions $2$ and $3$. In general for $d=1,2,3$, the nonlinear nonlocal force is given by
\begin{align*}
-\del PD^\epsilon(u)(x) &= \dfrac{4}{\epsilon^{d+1} \omega_d} \int_{H_\epsilon(x)} J(|y-x|/\epsilon) f'(|y-x| S(y,x;u)^2) S(y,x;u)e_{y-x}dy,
\end{align*}
where $u\in L^2(D;\bbR^d)$, $H_\epsilon(x)$ ball of radius $\epsilon$ centered at $x$ in $\bbR^d$, $\omega_d$ is volume of unit ball in $\bbR^d$, $e_{y-x}=\frac{y-x}{|y-x|}$, and $J$, $f$ are the same as before. 

{\vskip 2mm}
\begin{proposition}[Control on the difference between peridynamic force and elastic force]\label{prop:6.1}
Let $D$ be a bounded domain in $\bbR^d$. If $u \in C^3({D}; \bbR^d)$, and $\sup_{x\in D } | \del^3 u(x) | < \infty$ then 
\begin{align*}
\sup_{x\in D} |-\del PD^\epsilon(u)(x) - \del \cdot \bar{\bbC} \mathcal{E} u(x)| &= O(\epsilon),
\end{align*}
where $\bar{\bbC}$ is given by
\begin{align}\label{eq:elastic tensor dd}
\bar{\bbC} = \dfrac{2f'(0)}{\omega_d} \int_{H_1(0)} J(|\xi|) e_\xi \dyad e_\xi \dyad e_\xi \dyad e_\xi |\xi| d\xi,
\end{align}
$e_\xi = \xi/|\xi|$ and the strain tensor is $\mathcal{E}u(x)=(\del u(x)+\del u^T(x))/2$.

\end{proposition}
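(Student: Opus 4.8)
The plan is to carry out a pointwise Taylor expansion of the nonlocal force density about the point $x$, exactly mirroring the one-dimensional argument that underlies \cref{prop:2.1}, but keeping track of the directional tensor factors $e_{y-x}\dyad e_{y-x}$. First I would fix $x\in D$ and introduce the substitution $y = x + \epsilon\xi$ with $\xi \in H_1(0)$, so that
\begin{align*}
-\del PD^\epsilon(u)(x) = \dfrac{4}{\omega_d}\int_{H_1(0)} J(|\xi|)\, f'\!\big(\epsilon|\xi|\, S(x+\epsilon\xi,x;u)^2\big)\, S(x+\epsilon\xi,x;u)\, e_\xi\, d\xi,
\end{align*}
using $|y-x| = \epsilon|\xi|$ and $dy = \epsilon^d\, d\xi$. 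Writing $S(x+\epsilon\xi,x;u) = \big(u(x+\epsilon\xi)-u(x)\big)/(\epsilon|\xi|)$ and noting that $S$ here is the scalar bond strain $e_\xi\cdot(u(x+\epsilon\xi)-u(x))/(\epsilon|\xi|)$, Taylor's theorem with $u\in C^3(D;\bbR^d)$ gives
\begin{align*}
S(x+\epsilon\xi,x;u) = e_\xi\cdot(\del u(x)\,e_\xi) + \dfrac{\epsilon|\xi|}{2}\, e_\xi\cdot(\del^2 u(x)[e_\xi,e_\xi]) + O(\epsilon^2),
\end{align*}
with the remainder controlled uniformly in $x$ by $\sup_D|\del^3 u|$ and $|\xi|\le 1$.

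The next step is to insert this expansion into $f'$ using the asymptotic relation \cref{eq:per asymptote}, namely $f'(r) = f'(0) + O(r)$ near $r=0$: since the argument $\epsilon|\xi| S^2 = O(\epsilon)$, we get $f'(\epsilon|\xi| S^2) = f'(0) + O(\epsilon)$. Multiplying out, the integrand becomes
\begin{align*}
J(|\xi|)\Big[ f'(0)\, e_\xi\cdot(\del u(x)e_\xi) + \dfrac{\epsilon|\xi|}{2} f'(0)\, e_\xi\cdot(\del^2 u(x)[e_\xi,e_\xi]) \Big] e_\xi + O(\epsilon^2).
\end{align*}
Now I would use the symmetry $\xi \mapsto -\xi$ of the domain $H_1(0)$ and of $J(|\xi|)$: the zeroth-order term in $\epsilon$ is odd in $\xi$ once we account for the extra $e_\xi$, so it integrates to... no — here care is needed. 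The term $f'(0)\,(e_\xi\cdot \del u(x)\,e_\xi)\,e_\xi$ is odd in $\xi$ (three factors of $e_\xi$), hence integrates to zero; similarly the $O(\epsilon)$ correction term has four factors of $e_\xi$ and survives. This is the key structural point: the leading surviving contribution is the $O(\epsilon)$-scaled term, which after multiplication by the prefactor $4/(\epsilon^{d+1}\omega_d)\cdot\epsilon^d = 4/(\epsilon\omega_d)$ and the factor $\epsilon/2$ inside produces an $O(1)$ quantity; combining the $|\xi|$ factor and the fourth-order tensor $e_\xi\dyad e_\xi\dyad e_\xi\dyad e_\xi$ contracted against $\del^2 u(x)$ reconstructs precisely $\del\cdot\bar\bbC\mathcal{E}u(x)$ with $\bar\bbC$ as in \cref{eq:elastic tensor dd}, after symmetrizing $\del u$ into $\mathcal{E}u$ (legitimate because $\bar\bbC$ is totally symmetric so it annihilates the skew part).

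Finally I would collect the remainder terms. There are three sources of error, each $O(\epsilon)$ uniformly in $x\in D$: the $O(\epsilon^2)$ Taylor remainder in $S$ scaled by $1/\epsilon$, the $O(\epsilon)$ error in replacing $f'$ by $f'(0)$ (scaled by the $O(1)$ strain and the $1/\epsilon$), and the cross term between the first-order strain correction and the non-constancy of $f'$. Taking the supremum over $x$ and bounding all remainders by a constant times $\sup_D|\del^3 u|$, $M$, and $\sup|f''|$ (finite by smoothness and concavity of $f$) yields the claimed $O(\epsilon)$ estimate. The main obstacle I anticipate is bookkeeping the tensor contractions correctly — in particular verifying that the vector bond force $S\,e_{y-x}$, once expanded, assembles into $\del\cdot(\bar\bbC\mathcal{E}u)$ rather than some other second-order operator, and checking that the odd-in-$\xi$ terms genuinely cancel; the analytic estimates themselves are routine given \cref{eq:per asymptote} and the compact support and boundedness of $J$.
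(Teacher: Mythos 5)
Your proof is correct and follows essentially the route the paper itself indicates: the paper omits the proof of Proposition 6.1, stating only that one repeats the one-dimensional Taylor-expansion argument of Section 7 while accounting for the vector nature of the displacement field, and that is precisely what you carry out (scalar bond strain, expansion of $f'$ about $0$, cancellation of odd-in-$\xi$ terms, identification of $\bar{\bbC}$ and symmetrization to $\mathcal{E}u$). Two small bookkeeping points: your first display after the substitution $y=x+\epsilon\xi$ drops the residual factor $1/\epsilon$ (you recover the correct prefactor $4/(\epsilon\omega_d)$ later), and the contribution $f''(0)\,\epsilon|\xi|S^2$ from expanding $f'$, multiplied by the leading strain, $e_\xi$, and the $1/\epsilon$, is $O(1)$ pointwise rather than $O(\epsilon)$ — it vanishes only after integration by the same odd-in-$\xi$ symmetry you invoke for the $f'(0)$ term, exactly as the corresponding term $f''(0)p^3|y-x|e$ is handled in the paper's one-dimensional proof.
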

{\vskip 2mm}
In this treatment we define the boundary $\partial D$ of $D\subset\mathbb{R}^d$ in the usual way as the set of limit points of $D$. Similar to the case of one-dimension, we consider $u=0$ on $\partial D$ and extend $u$ by zero by zero outside $D$. We prescribe a nonlocal boundary condition on $u^\epsilon$ given by $u^\epsilon=0$ on $\{x\in \bbR^d: dist(x, \partial D) \leq \epsilon \}$. The  initial conditions for $u$ and $u^\epsilon$ are the same and given by $u_0$ and $v_0$ on $D$ with $u_0$ and $v_0$ defined on $\mathbb{R}^d$, $d=2,3$ vanishing outside $D'\subset D$ such that $dist(\partial D',\partial D)>0$. We have

\begin{theorem}[Convergence of nonlinear peridynamics to the linear elastic wave equation in the limit that the horizon goes to zero]\label{thm:6.2}
Let $e^\epsilon := u^\epsilon - u$, where $u^\epsilon$ is the solution of peridynamics equation 
\begin{align}
\rho\ddot{u}^\epsilon(t,x) &= -\del PD^\epsilon(u^\epsilon(t))(x) + b(t,x), \label{eq:perivec} 
\end{align}
and $u$ is the solution of elastodynamics equation
\begin{align}
\rho\ddot{u}(t,x)=\del \cdot \bar{\bbC} \mathcal{E}u(t,x)+b(t,x), \label{eq:elastovec} 
\end{align}
with elastic tensor given by \cref{eq:elastic tensor dd}. We assume that $u^\epsilon$ and $u$ satisfy same initial condition, and $u=0$ on $\partial D$. Suppose $u^\epsilon(t) \in C^4(D; \bbR^d) $, for all $\epsilon >0$ and $t\in [0,T]$. Suppose there exists $C_1>0$, $C_1$ independent of the size of horizon $\epsilon$, such that
\begin{align*}
\sup_{\epsilon >0} \left[ \sup_{(x,t)\in D \times J} | \del^4 u^\epsilon(t,x) | \right] < C_1 < \infty.
\end{align*}
Then for $\epsilon$ such that $dist(\partial D',\partial D)>\epsilon>0$, there $\exists C_2 > 0$ such that
\begin{align*}
\sup_{t\in [0,T]} \leftcr \int_{D} \rho |\dot{e}^\epsilon(t,x)|^2 dx + \int_{D} \mathcal{E} e^\epsilon(t,x) \cdot \bar{\bbC} \mathcal{E} e^\epsilon(t,x) dx \rightcr &\leq C_2 \epsilon^2
\end{align*}
so $u^\epsilon \to u$ in the $H^1(D; \bbR^d)$ norm at the rate $\epsilon$ uniformly in time $t \in [0,T]$.
\end{theorem}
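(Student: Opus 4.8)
The plan is to reduce the problem to a linear, constant–coefficient wave equation driven by a small source, exactly as in the one–dimensional \cref{thm:conv peri elasti}, and then close the estimate using the ellipticity of $\bar{\bbC}$ on symmetric matrices together with a Korn inequality. Set $e^\epsilon := u^\epsilon - u$. Subtracting \cref{eq:elastovec} from \cref{eq:perivec} and adding and subtracting $\del\cdot\bar{\bbC}\mathcal{E}u^\epsilon$ gives the error equation
\begin{align*}
\rho\ddot{e}^\epsilon(t,x) &= \del\cdot\bar{\bbC}\mathcal{E}e^\epsilon(t,x) + \tau^\epsilon(t,x), \\
\tau^\epsilon(t,x) &:= -\del PD^\epsilon(u^\epsilon(t))(x) - \del\cdot\bar{\bbC}\mathcal{E}u^\epsilon(t,x).
\end{align*}
The remainder $\tau^\epsilon$ is precisely the modeling error of the nonlocal force against the local one, but evaluated at the nonlocal solution $u^\epsilon$. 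Since $u^\epsilon(t)\in C^4(D;\bbR^d)$ with $\sup_{\epsilon>0}\sup_{(x,t)}|\del^4 u^\epsilon(t,x)|<C_1$ (the hypotheses likewise furnish a uniform-in-$\epsilon$ bound on $\sup_{(x,t)}|\del^3 u^\epsilon(t,x)|$, and $u^\epsilon$ vanishes identically near $\partial D$ by the collar condition so the nonlocal operator is well posed up to $\partial D$), \cref{prop:6.1} applies to $u^\epsilon(t)$ and yields $\sup_{x\in D}|\tau^\epsilon(t,x)|=O(\epsilon)$, hence $\sup_{t\in[0,T]}\Vert\tau^\epsilon(t)\Vert_{L^2(D;\bbR^d)}\le C\epsilon$ with $C$ independent of $\epsilon$.

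Next I would run the standard energy identity. Take the $L^2(D;\bbR^d)$ pairing of the error equation with $\dot{e}^\epsilon(t)$ and integrate by parts in $x$. The boundary term $\int_{\partial D}(\bar{\bbC}\mathcal{E}e^\epsilon\,\bnu)\cdot\dot{e}^\epsilon\,dS$ vanishes because $u=0$ on $\partial D$ while $u^\epsilon=0$ on $\{x:\mathrm{dist}(x,\partial D)\le\epsilon\}\supset\partial D$, so $\dot{e}^\epsilon(t,\cdot)=0$ on $\partial D$ for every $t$. Using that $\bar{\bbC}$ carries symmetric matrices to symmetric matrices and is fully index–symmetric (so $\bar{\bbC}\mathcal{E}e^\epsilon:\del\dot{e}^\epsilon=\bar{\bbC}\mathcal{E}e^\epsilon:\mathcal{E}\dot{e}^\epsilon=\tfrac12\tfrac{d}{dt}(\mathcal{E}e^\epsilon\cdot\bar{\bbC}\mathcal{E}e^\epsilon)$), one obtains
\begin{align*}
\frac{d}{dt}\leftcr \frac{\rho}{2}\int_D|\dot{e}^\epsilon(t,x)|^2\,dx + \frac{1}{2}\int_D\mathcal{E}e^\epsilon(t,x)\cdot\bar{\bbC}\mathcal{E}e^\epsilon(t,x)\,dx \rightcr = \int_D\tau^\epsilon(t,x)\cdot\dot{e}^\epsilon(t,x)\,dx.
\end{align*}
Denote the bracketed energy by $\mathcal{H}(t)$. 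For symmetric $A$ one has $A\cdot\bar{\bbC}A=\tfrac{2f'(0)}{\omega_d}\int_{H_1(0)}J(|\xi|)(e_\xi\cdot A e_\xi)^2|\xi|\,d\xi\ge 0$, so $\mathcal{H}(t)\ge 0$ and $\Vert\dot{e}^\epsilon(t)\Vert_{L^2}\le\sqrt{2\mathcal{H}(t)/\rho}$. Because the initial data of $u^\epsilon$ and $u$ agree, $e^\epsilon(0)=\dot{e}^\epsilon(0)=0$, so $\mathcal{H}(0)=0$. Cauchy–Schwarz on the right gives $\mathcal{H}'(t)\le\Vert\tau^\epsilon(t)\Vert_{L^2}\sqrt{2\mathcal{H}(t)/\rho}$, hence $\tfrac{d}{dt}\sqrt{\mathcal{H}(t)}\le\Vert\tau^\epsilon(t)\Vert_{L^2}/\sqrt{2\rho}\le C\epsilon/\sqrt{2\rho}$; integrating from $t=0$ and squaring yields $\mathcal{H}(t)\le C_2\epsilon^2$ uniformly on $[0,T]$, which is the asserted energy estimate (no Gronwall loop is needed since $\tau^\epsilon$ is independent of $e^\epsilon$).

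Finally, I would convert the energy bound into $H^1$ convergence. The quadratic form above also shows $\bar{\bbC}$ is positive definite on symmetric matrices: if $\xi\mapsto e_\xi\cdot A e_\xi$ vanishes a.e.\ on the unit sphere then the symmetric matrix $A$ is zero, so there is $c_0>0$ with $A\cdot\bar{\bbC}A\ge c_0|A|^2$; hence $\Vert\mathcal{E}e^\epsilon(t)\Vert_{L^2}^2\le c_0^{-1}\int_D\mathcal{E}e^\epsilon\cdot\bar{\bbC}\mathcal{E}e^\epsilon\,dx\le 2c_0^{-1}\mathcal{H}(t)$. Since $e^\epsilon(t)=0$ on $\partial D$, Korn's first inequality gives $\Vert\del e^\epsilon(t)\Vert_{L^2}^2\le 2\Vert\mathcal{E}e^\epsilon(t)\Vert_{L^2}^2$, and the Poincaré inequality then bounds $\Vert e^\epsilon(t)\Vert_{L^2}$ by $\Vert\del e^\epsilon(t)\Vert_{L^2}$; combining these yields $\sup_{t\in[0,T]}\Vert e^\epsilon(t)\Vert_{H^1(D;\bbR^d)}\le C\epsilon$. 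The main obstacle I anticipate is not in this chain of inequalities but in the bookkeeping: making the energy identity fully rigorous (enough regularity in time of $u^\epsilon,u$ for the integration by parts and for $\int_D\bar{\bbC}\mathcal{E}e^\epsilon:\mathcal{E}\dot{e}^\epsilon=\tfrac12\tfrac{d}{dt}\int_D\mathcal{E}e^\epsilon\cdot\bar{\bbC}\mathcal{E}e^\epsilon$), and verifying that the constant in $\tau^\epsilon=O(\epsilon)$ furnished by \cref{prop:6.1} is genuinely $\epsilon$-independent — this is exactly where the uniform-in-$\epsilon$ control of the spatial derivatives of $u^\epsilon$, and the Dirichlet/collar conditions forcing $e^\epsilon$ and $\dot e^\epsilon$ to vanish on $\partial D$, are used. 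Everything else — ellipticity of $\bar{\bbC}$, Korn, Poincaré, the elementary time integration — is routine.
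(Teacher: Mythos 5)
Your proposal is correct and follows essentially the same route the paper takes: the paper omits a written proof of \cref{thm:6.2}, stating only that one repeats the one-dimensional argument of \cref{s:proof} (form the error equation with forcing $\tau^\epsilon=O(\epsilon)$ via \cref{prop:6.1}, run the energy estimate, then pass to $H^1$), which is exactly your decomposition. The only additions on your side are welcome ones: you make explicit the vector-valued ingredients the paper leaves implicit (positive definiteness of $\bar{\bbC}$ on symmetric matrices and Korn's first inequality to recover the full gradient), and you replace the paper's appeal to Gronwall by a direct integration of $\tfrac{d}{dt}\sqrt{\mathcal{H}}$, which is legitimate since $\tau^\epsilon$ does not depend on $e^\epsilon$.
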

{\vskip 2mm}

The proof is similar to the case of one dimension except in this case vector nature of displacement field has to be considered. Following the steps in \cref{s:proof}, \cref{prop:6.1} and \cref{thm:6.2} can be shown and therefore we omit the proof.

\section{Proof of claims}\label{s:proof}
In this section, we will present the proof of claims in \cref{s:intro} and \cref{s:fe discr}.  For simplification, we adopt the following notation
\begin{align}\label{eq:notat 1}
p &:= u_x(x), \qquad q := u_{xx}(x), \qquad r := u_{xxx}(x), \notag \\
e &:= \frac{(y-x)}{|y-x|}=sign\{y-x\}.
\end{align}
In  proving results related to consistency error, we will employ the Taylor series expansion of $u(y)$ with respect to point $x_i$. Since the potential $f$ is assumed to be sufficiently smooth, $f''(r), f'''(r),$ and $f^{''''}(r)$ are bounded for $0<r<\infty$.

\subsection{Bound on difference of peridynamic, linear peridynamic, and elastodynamic force}
We prove \cref{prop:2.1} for $u \in C^3({D})$. Using Taylor series expansion, we get
\begin{align*}
S(y,x; u) &=u_x(x) \dfrac{y-x}{|y-x|} + 1/2u_{xx}(x) |y-x| + 1/6 u_{xxx}(\xi) |y-x| (y-x) \notag \\
&= pe + q|y-x|/2 + T_1(y - x)/|y-x| ,
\end{align*}
where $T_1 = O(|y-x|^3)$. On taking the Taylor series expansion of the nonlinear potential, and substituting in the expansion above, we get
\begin{align*}
&\left(f'(|y-x|S(y,x;u)^2) - f'(0) \right) S(y,x;u) \notag \\
&= f''(0) p^3 |y-x| e + (f''(0) p^2 q3/2 + f'''(0)p^5e/2) |y-x|^2 + T_2(y-x),
\end{align*}
where $T_2(y-x) = O(|y-x|^3)$. Using the previous equation, we get
\begin{align*}
& -\del PD^\epsilon(u)(x) + \del PD^\epsilon_l(u)(x) \notag \\
&= \dfrac{2}{\epsilon^2} \int_{x-\epsilon}^{x+\epsilon} J(|y-x|/\epsilon)\left(f'(|y-x|S(y,x;u)^2) - f'(0) \right) S(y,x;u) dy \notag \\
&= \dfrac{2}{\epsilon^2} \int_{x-\epsilon}^{x+\epsilon} \left[ f''(0) p^3 |y-x| e \right. \notag \\
& \qquad \qquad \left. + (f''(0) p^2 q3/2 + f'''(0)p^5e/2) |y-x|^2 + T_2(y-x) \right] J(|y-x|/\epsilon) dy \notag \\
&= \dfrac{2}{\epsilon^2} \int_{x-\epsilon}^{x+\epsilon} f''(0) p^2 q3/2 |y-x|^2 J(|y-x|/\epsilon) dy + O(\epsilon^2) \notag \\
&= O(\epsilon),
\end{align*}
where terms with $e$ integrate to zero. From this, we see that same estimate holds when $u$ has continuous and bounded third or fourth derivatives. This proves the assertion of \cref{prop:2.1}. 

To prove \cref{eq:linear elastic force diff}, we proceed as follows
\begin{align*}
-\del PD^\epsilon_l(u)(x) &=  \dfrac{2}{\epsilon^2} \int_{x-\epsilon}^{x+\epsilon} J(|y-x|/\epsilon) f'(0)S(y,x;u)dy \notag \\
&= \dfrac{2}{\epsilon^2} \int_{x-\epsilon}^{x+\epsilon} J(|y-x|/\epsilon) f'(0) \left[ pe + q|y-x|/2 + T_1(y - x)/|y-x| \right] dy \notag \\
&= \left(\dfrac{2}{\epsilon^2} \int_{x-\epsilon}^{x+\epsilon} J(|y-x|/\epsilon) f'(0) |y-x|/2 dy \right) q + O(\epsilon) \notag \\
&= \bbC u_{xx}(x) + O(\epsilon),
\end{align*}
where we identify $\bbC$ using \cref{eq:elasto const C}, and $q = u_{xx}(x)$. This proves \cref{eq:linear elastic force diff}. 

To prove \cref{eq:linear to elasto 1}, we assume $u\in C^4(D)$ and \cref{eq:assump on u bd4}. Then, by Taylor series expansion, we have
\begin{align*}
S(y,x;u) &= u_x(x) \dfrac{y-x}{|y-x|} + 1/2u_{xx}(x) |y-x| \notag \\
&\qquad + 1/6 u_{xxx}(x) |y-x| (y-x) + 1/24 u_{xxxx}(\xi) |y-x|^3  \notag \\
&= pe + q|y-x|/2 + r |y-x|^2 e + T_1(y-x)/|y-x| ,
\end{align*}
where $T_1(y-x) = O(|y-x|^4)$. Substituting this into $-\del PD^\epsilon_l(u)(x)$, and noting that terms with $e$ integrate to zero, we get
\begin{align*}
-\del PD^\epsilon_l(u)(x) &=\bbC u_{xx}(x) + O(\epsilon^2).
\end{align*}

\subsection{Convergence of solution of peridynamic equation to the elastodynamic equation}
To prove \cref{thm:conv peri elasti}, we proceed as follows. Let $u^\epsilon$ be the solution of peridynamic model in \cref{eq:peri}, and let $u$ be the solution of elastodynamic equation in \cref{eq:elasto}. Boundary conditions and initial conditions are same as described in \cref{s:intro}. Assuming that the hypothesis of \cref{thm:conv peri elasti} holds, we have from \cref{prop:2.1}
\begin{align*}
-\del PD^\epsilon(u^\epsilon(t))(x) = \bbC u^\epsilon_{xx}(t,x) + O(\epsilon).
\end{align*}
We have also assumed that there exists $C_1 < \infty$ such that
\begin{align*}
\sup_{\epsilon >0} \left[ \sup_{(x,t)\in D \times J} | u^\epsilon_{xxxx}(t,x) | \right] < C_1 < \infty.
\end{align*}
Combining this together with \cref{eq:nonlinear elastic force diff} we have,
\begin{align*}
\sup_{(x,t)\in D\times J} |-\del PD^\epsilon(u^\epsilon(t))(x) - \bbC u^\epsilon_{xx}(t,x)| \leq C_3 \epsilon,
\end{align*}
where $C_3$ is independent of $x$, $t$ and $\epsilon$. Subtracting equation \cref{eq:peri} from equation \cref{eq:elasto} shows that  $e^\epsilon = u^\epsilon - u$ satisfies
\begin{align}\label{eq:error e eps}
\rho \ddot{e}^\epsilon(t,x) &= \bbC e^\epsilon_{xx}(t,x) + (-\del PD^\epsilon(u^\epsilon(t))(x) - \bbC u^\epsilon_{xx}(t,x)) \notag \\
&= \bbC e^\epsilon_{xx}(t,x) + F(t,x),
\end{align}
where 
\begin{align*}
F(t,x) &= -\del PD^\epsilon(u^\epsilon(t))(x) - \bbC u^\epsilon_{xx}(t,x) \,\,\,\,\,\hbox{  and  }\sup_{(x,t)\in D\times J} |F(t,x)|\leq  C_3\epsilon,
\end{align*}
with boundary condition and initial condition given by
\begin{align*}
e^\epsilon(0,x) &= 0, \qquad \dot{e}^\epsilon(0,x) = 0 \qquad \qquad \forall x\in D, \notag \\
e^\epsilon(t,x) &= 0, \qquad \dot{e}^\epsilon(t,x) = 0 \qquad \qquad \forall (t,x) \in [0,T] \times \partial D^\epsilon.
\end{align*}

Since $e^\epsilon$ satisfies \cref{eq:error e eps} we can apply Gronwall's inequality to  find
\begin{align}\label{eq:Gronwall}
\sup_{t\in J} \int_D \rho |\dot{e}^\epsilon(t,x)|^2dx + \int_D \bbC |e^\epsilon_x(t,x)|^2 dx \leq C_2 \epsilon^2.
\end{align}
Now to show that $e^\epsilon \to 0$ in $H^1(D)$, we apply \cref{eq:Gronwall} together with Poincare's inequality to get
\begin{align*}
||e^\epsilon(t,x)||^2_{L^2(D)} &\leq C ||e^\epsilon_x(t,x)||_{L^2(D)}^2 \notag \\
&\leq \frac{C}{\mathbb{C}} \sup_{t\in [0,T]} \leftcr \int_{D} \rho |\dot{e}^\epsilon(t,x)|^2 dx + \int_{D} \bbC |e^\epsilon_x(t,x)|^2 dx \rightcr \notag \\
&\leq \frac{C}{\mathbb{C}}C_2 \epsilon^2,
\end{align*}
where $C$ is the Poincare constant. On collecting results this shows that $e^\epsilon \to 0$ in the $H^1(D)$ norm with the rate $\epsilon$. This completes the proof of \cref{thm:conv peri elasti}. Identical arguments using \cref{eq:linear to elasto 1} deliver \cref{thm:conv linperi elasti}.

\subsection{Bounds on the consistency error}
We first prove for linear continuous interpolation and then extend the proof to higher order interpolations.

\subsubsection{Linear interpolation}
In this section \cref{prop:3.1} is established. We begin by writing the difference $S(y,x_i;\lininter{u})-S(y,x_i;u)$.
It is given by
\begin{equation}\label{eq:diff strain simple}
S(y,x_i;\lininter{u})-S(y,x_i;u)=\frac{\lininter{u}(y)-u(y)}{|y-x_i|}.
\end{equation}
From the hypothesis of \cref{prop:3.1} there is a constant $C$ for which $|u_{xx}|<C$ on $D$. Using the approximation property  $|\lininter{u}-u|\leq Ch^2$ and applying $|y-x_i|>h$ for $y$ outside the interval $[x_{i-1},x_{i+1}]$ gives
\begin{align*}
|S(y,x_i;\lininter{u})-S(y,x_i;u)|\leq \begin{cases}
C|y-x_i|& \qquad \text{if } y\in [x_{i-1}, x_{i+1}], \\
Ch& \qquad \text{if } y\in [x_{i}-\epsilon,x_{i-1}], \\
Ch& \qquad \text{if } y\in [x_{i+1}, x_{i}+\epsilon].
\end{cases}
\end{align*}
Note further that
$|y-x_i|\leq h$ for $y\in [x_{i-1}, x_{i+1}]$
and we conclude
\begin{equation}
\label{eq: interpstrain est}
|S(y,x_i;\lininter{u})-S(y,x_i;u)|\leq Ch.
\end{equation}
Straight forward calculation shows
\begin{align*}
|\nabla PD^\epsilon_\ell(\lininter{u})-\nabla PD^\epsilon_\ell(u)|&\leq\
\frac{2f'(0)}{\epsilon^2}\int_{x_i-\epsilon}^{x_i+\epsilon}|S(y,x_i,\lininter{u})-S(y,x_i;u)|J(|y-x_i|/\epsilon)\,dy\\
&\leq \frac{4f'(0)MCh}{\epsilon},\notag
\end{align*}
where $M=\max_{0\leq z<1} J(z)$ and \cref{eq:claim 2} of  \cref{prop:3.1} follows.

We now establish the consistency error for the nonlinear nonlocal model. We begin with an estimate for the strain.
Applying the notation described in \cref{eq:notat 1} with $p$ and $e$  defined for $x=x_i$ we apply Taylor's theorem with reminder  to get
\begin{align}\label{eq:taylor S Ctwo}
S(y,x_i;u) &= u_x(x_i) (y-x_i)/|y-x_i| + u_{xx}(\xi)|y-x_i|/2 \notag \\
&= p e + T_1(y-x_i)/|y-x_i|
\end{align}
where $T_1(y-x_i) = O(|y-x_i|^2)$.

From \cref{eq: interpstrain est} we can write
\begin{equation*}
S(y,x_i;u)=S(y,x_i;\lininter{u})+O(h),
\end{equation*}
or
\begin{equation}
\label{eq: interpstrain estorderflip}
S(y,x_i;\lininter{u})=S(y,x_i;u)+O(h),
\end{equation}
or we can write  $S(y,x_i;u)=S(y,x_i;\lininter{u})+\eta$, where $|\eta|<Ch$. Adopting this  convention first we write
\begin{align*}
&|y-x_i|S^2(y,x_i;u)=|y-x_i|(S(y,x_i;\lininter{u})+\eta)^2\\
&=|y-x_i|S^2(y,x_i;\lininter{u})+\sum_{j\in I}2\eta\phi_j(y)(u(x_j)-u(x_i))+|y-x_i|\eta^2,\notag
\end{align*}
where the set $I=\{j\,:x_j\in[x_i-\epsilon,x_i+\epsilon]\} $ and we have used the identity
\begin{align*}
1=\sum_{j\in I}\phi_j(y), y\in[x_{i}-\epsilon,x_i+\epsilon].
\end{align*}
Next we estimate
\begin{align*}
\sum_{j\in I}\phi_j(y)(u(x_j)-u(x_i))&\leq\sup\{|u(y)-u(x_i)| \hbox{  for $y\in[x_{i}-\epsilon,x_i+\epsilon]$}\}\\
&\leq2 \epsilon\sup_{y\in D}|u_x(y)|.\notag
\end{align*}
Since $u_x$ is bounded we see that
$\sum_{j\in I}\phi_j(y)(u(x_j)-u(x_i))=\zeta$ where $|\zeta|\leq\epsilon \,Const.$ and
\begin{align*}
&|y-x_i|S^2(y,x_i;u)=|y-x_i|(S(y,x_i;\lininter{u})^2+2\zeta\eta+\eta^2
\end{align*}
Applying Taylor's theorem with remainder to the function $f'(|y-x|(S(y,x_i;\lininter{u})+\eta)^2)$ now gives
\begin{align}\label{eq:taylor fprime Ctwo}
f'(|y-x|S(y,x_i;u)^2) &=f'(|y-x|(S(y,x_i;\lininter{u}))^2) \\
&+O(h),\notag
\end{align}
where we have used that $f''(r)$ is bounded on $0\leq r\leq \infty$.

Then application of \cref{eq:taylor S Ctwo},  \cref{eq: interpstrain estorderflip}, and \cref{eq:taylor fprime Ctwo} and substitution delivers the desired estimate

\begin{align*}
&-\del PD^\epsilon(\lininter{u})(x_i)  +\del PD^\epsilon(u)(x_i)\notag \\
&= \dfrac{2}{\epsilon^2} \int_{x_i-\epsilon}^{x_i + \epsilon} f'(|y-x|(S(y,x_i;\lininter{u}))^2)\left(S(y,x_i;{u})+O(h)\right)J(|y-x_i|/\epsilon) dy  \\
&-\dfrac{2}{\epsilon^2} \int_{x_i-\epsilon}^{x_i + \epsilon} 
\left(f'(|y-x|(S(y,x_i;\lininter{u}))^2)+O(h)\right)S(y,x_i;{u}) J(|y-x_i|/\epsilon) dy \notag \\
&= O(h/\epsilon),
\end{align*}

and \cref{eq:claim 2nonlin} of \cref{prop:3.1} is proved.

\subsubsection{Higher order interpolations and convergence}
In this section we outline the proof of higher order accuracy using higher order interpolation functions when the solution has sufficiently high order bounded derivatives. The order of the interpolation is $p$, the mesh size $h$, and the grid points are $x_i=ih/p$ for $i\in K\cup K^\epsilon$. We state the following key result:

\begin{lemma}\label{pinterp}
If $u\in C^{p+1}(D)$ with $(p+1)^{\text{th}}$ derivative bounded then we have for $p^{th}$ order interpolation the following estimate
\begin{align}
|S(y,x_i;\lininter{u}) - S(y,x_i; u)| \leq \tilde{C} h^p \qquad \forall i\in K, \forall y \in [x_i - \epsilon, x_i+\epsilon]
\end{align}
where constant $\tilde{C}$ is independent of $h$, $i$, and $y$.
\end{lemma}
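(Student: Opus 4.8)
\textbf{Proof proposal for \cref{pinterp}.} The plan is to reduce the claimed estimate to a classical Lagrange interpolation error bound, exploiting that $x_i$ is an interpolation node. Since $x_i$ lies on the grid we have $\lininter{u}(x_i)=u(x_i)$, so, writing $\psi:=\lininter{u}-u$,
\begin{align*}
S(y,x_i;\lininter{u})-S(y,x_i;u)
=\frac{\big(\lininter{u}(y)-u(y)\big)-\big(\lininter{u}(x_i)-u(x_i)\big)}{|y-x_i|}
=\frac{\psi(y)}{|y-x_i|}.
\end{align*}
Hence it suffices to prove $|\psi(y)|\leq \tilde C\,h^{p}\,|y-x_i|$ for all $i\in K$ and all $y\in[x_i-\epsilon,x_i+\epsilon]$, with $\tilde C$ depending only on $p$, the reference shape functions, and $\sup_D|u^{(p+1)}|$.

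The two ingredients I will use are: (i) $\psi$ is continuous and vanishes at every grid point, in particular $\psi(x_i)=0$; and (ii) on each mesh element — which has width $h$, since nodes are spaced $h/p$ apart and an order-$p$ element spans $p$ node-intervals — $\psi$ is $C^{p+1}$ and the standard Lagrange interpolation error estimate yields $\sup|\psi'|\leq C\,h^{p}\sup_D|u^{(p+1)}|$, with $C$ coming from the reference element. Since $\psi$ is continuous and piecewise $C^1$ with derivative bounded (almost everywhere) by $L:=C\,h^{p}\sup_D|u^{(p+1)}|$, integrating $\psi'$ shows that $\psi$ is globally Lipschitz on $D\cup\partial D^\epsilon$ with constant $L$. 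Combined with $\psi(x_i)=0$ this gives $|\psi(y)|\leq L\,|y-x_i|$, hence $|S(y,x_i;\lininter{u})-S(y,x_i;u)|\leq L=\tilde C h^{p}$, with $\tilde C$ manifestly independent of $h$, $i$, and $y$.

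I do not anticipate a genuine obstacle; the only delicate point is the behavior of the quotient near $y=x_i$, and this is precisely where the vanishing of $\psi$ at the node and the $O(h^p)$ bound on $\psi'$ are needed (away from $x_i$, e.g.\ for $|y-x_i|\geq h$, one could alternatively use the cruder bound $|\psi(y)|\leq C h^{p+1}$ and divide). With \cref{pinterp} in hand, \cref{prop:3.3} follows by repeating the linear-interpolation argument of the previous subsection with $h$ replaced by $h^p$: for the linearized force,
\begin{align*}
|\del PD^\epsilon_{l}(\lininter{u})(x_i)-\del PD^\epsilon_{l}(u)(x_i)|
\leq\frac{2f'(0)}{\epsilon^2}\int_{x_i-\epsilon}^{x_i+\epsilon}\big|S(y,x_i;\lininter{u})-S(y,x_i;u)\big|\,J(|y-x_i|/\epsilon)\,dy
\leq\frac{4f'(0)M\tilde C\,h^{p}}{\epsilon},
\end{align*}
and for the nonlinear force one Taylor-expands $f'(|y-x_i|S^2)$ about the interpolated strain exactly as in the $p=1$ case, using the boundedness of $f''$ and of $u_x$ to turn the $O(h^p)$ strain error into an $O(h^p/\epsilon)$ force error. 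Finally \cref{prop:3.4} follows from the triangle inequality together with \cref{prop:2.1}.
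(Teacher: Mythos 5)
Your proof is correct, and it handles the lemma by a somewhat different (and more complete) route than the paper. Both arguments start from the same identity $S(y,x_i;\lininter{u})-S(y,x_i;u)=\psi(y)/|y-x_i|$ with $\psi=\lininter{u}-u$, but the paper's outline then invokes only the sup-norm interpolation estimate $|\psi|\leq Ch^{p+1}$ and divides by $|y-x_i|\gtrsim h$, which covers $y$ outside the elements adjacent to $x_i$ but leaves the near-node region $|y-x_i|<h/p$ implicit for general $p$ (in the $p=1$ case the paper treats $y\in[x_{i-1},x_{i+1}]$ as a separate case). You instead use the $W^{1,\infty}$ interpolation estimate $\sup|\psi'|\leq Ch^{p}\sup_D|u^{(p+1)}|$ on each element together with the fact that $\psi$ vanishes at the node $x_i$, so that $\psi$ is globally Lipschitz with constant $O(h^p)$ and $|\psi(y)|\leq Ch^{p}|y-x_i|$ for \emph{all} $y$; this absorbs the singular denominator uniformly, including at $y\to x_i$, in a single step. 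The price is that you need the derivative error estimate (and the observation that a continuous, piecewise $C^1$ function with an a.e.\ derivative bound is Lipschitz across element boundaries), whereas the paper gets by with the cheaper $L^\infty$ bound at the cost of a case split and a slightly loose treatment of the adjacent elements (where, with nodes spaced $h/p$, the correct lower bound is $|y-x_i|\geq h/p$, affecting only the constant). Your subsequent derivation of \cref{prop:3.3} and \cref{prop:3.4} mirrors the paper's reduction to the $p=1$ argument.
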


\begin{proof}
Fix some $i\in K$. There exist $C>0$ such that $\sup |\partial_{x}^{p+1} u| < C$. The interpolation error \cite{IK} is $|\lininter{u}(y) - u(y)| \leq C h^{p+1}$ for all $y \in \overline{D\cup \partial D^\epsilon}$. Now for $y\in [x_i - \epsilon, x_{i-1}] \cup [x_{i+1}, x_i +\epsilon]$, $h \leq |y-x_i|$ and hence $\frac{1}{|y-x_i|} \leq \frac{1}{h}$. Thus, we have from \cref{eq:diff strain simple}
\begin{align}\label{eq:strain estimate 1}
|S(y,x_i;\lininter{u}) - S(y,x_i; u)| \leq C h^p.
\end{align}
\end{proof}

The proofs of \cref{prop:3.3} and \cref{prop:3.4} now follow using \cref{pinterp} and applying the same steps used in the proof of \cref{prop:3.1} and \cref{prop:3.2} for linear interpolation.

\section{Conclusion}\label{s:concl}
Earlier related work \cite{CMPer-JhaLipton} analyzed the model considered here but for less regular non-differentiable H\"older continuous solutions. For that case solutions can approach discontinuous deformations (fracture like solutions) as $\epsilon\rightarrow 0$ and it is shown that the numerical approximation of the nonlinear model in dimension $d=1,2,3$ converges to the exact solution at the rate $O(\Delta t + h^\gamma/\epsilon^2)$ where $\gamma\in (0,1]$ is the H\"older exponent, $h$ is the size of mesh, $\epsilon$ is the size of horizon, and $\Delta t$ is the size of time step.  In this work we have shown that we can improve the rate of convergence if we somehow have a-priori knowledge on the number of bounded continuous derivatives of the solution. If the solution has $p+1$ derivatives one can use $p^{th}$ order polynomial local interpolation and obtain an  order $h^p/\epsilon$ consistency error.

In this work we have analyzed  the smooth prototypical micro-elastic bond model introduced in \cite{CMPer-Silling}.
From the perspective of computation, the resolution of the mesh inside the horizon of nonlocal interaction is the main contributor to the computational complexity. This work provides explicit error estimates for the differences between the solutions of elastodynamics and nonlocal models. It shows that the effects of the mesh size relative to the horizon can be significant. Numerical errors can grow with decreasing horizon if the mesh is not chosen suitably small with respect to the peridynamic horizon.  A fixed ratio of mesh size to horizon will not increase accuracy as the horizon tends to zero. We have carried out numerical simulations where the accuracy decreases when $\epsilon$ is reduced and the ratio $h/\epsilon$ is fixed. This is shown to be in line with the consistency error bounds that vanish at the rate $O(h/\epsilon)$.  These results show that the grid refinement relative to the horizon length scale has
more importance than decreasing the horizon length when establishing convergence to the
classical elastodynamics description.

The results of this analysis rigorously show that one can use a discrete linear local elastodynamic model to approximate the nonlinear nonlocal evolution when sufficient regularity of the evolution is known a-priori. In doing so one incurs a modeling error of order $\epsilon$ but saves computational work in that there is no nonlocality so the mesh diameter $h$ no longer has to be small relative to $\epsilon$. The discretization error is now associated with the approximation error for the initial boundary value problem for the linear elastic wave equation. 

We reiterate that the nonlinear kernel analyzed here corresponds to a smooth version of the prototypical micro-elastic bond model treated in \cite{CMPer-Silling}. On the other hand  its linearization corresponds to the one of the types  kernel functions treated in \cite{ChenBakenhusBobaru}. In this paper the goal is to understand the convergence of numerical schemes for the nonlinear model together with its linearization with respect to horizon and discretization. The work of \cite{ChenBakenhusBobaru} asks distinctly different questions and is concerned with identifying linear nonlocal models that converge to linear elastodynamics when the mesh density is held fixed and the horizon of nonlocality goes to zero. This is not the case for the kernel treated here.

Our results and analysis support a combined local - nonlocal approach to the numerical solution of these problems. This type of numerical approach is the focus of many recent investigations, see \cite{CMPer-Wildman}, \cite{CMPer-Seleson}, \cite{CMPer-Kilic}, \cite{CMPer-Oterkus}, \cite{CMPer-Zaccariotto}, \cite{CMPer-Han}, \cite{CMPer-Lubineau}, and \cite{CMPer-Liu}, where the use of nonlocal models and local models are applied to different subdomains of the computational domain. These approaches are promising in that they reduce the  computational cost of the numerical simulation. A full understanding of the error associated in implementing these adaptive methods is an exciting prospect for future research.

\section*{Acknowledgements}
RL would like to acknowledge the support and kind hospitality of the Hausdorff Institute for Mathematics in Bonn during the trimester program on multiscale problems.


\newcommand{\noopsort}[1]{}
\begin{bibdiv}
\begin{biblist}

\bib{MALa-Berman}{book}{
      author={Abraham~Berman, Robert J.~Plemmons},
       title={Nonnegative Matrices in the Mathematical Sciences,},
      series={Classics in Applied Mathematics 9.},
   publisher={Society for Industrial and Applied Mathematics.},
        date={1987},
        ISBN={9780898713213,0898713218},
}

\bib{CMPer-Agwai}{article}{
      author={Agwai, Abigail},
      author={Guven, Ibrahim},
      author={Madenci, Erdogan},
       title={Predicting crack propagation with peridynamics: a comparative
  study,},
        date={2011},
     journal={International Journal of Fracture,},
      volume={171},
      number={1},
       pages={65\ndash 78.},
}

\bib{AksoyluUnlu}{article}{
      author={Aksoylu, B.},
      author={Unlu, Z},
       title={Conditioning analysis of nonlocal integral operators in
  fractional Sobolev spaces,},
        date={2014},
     journal={SIAM Journal on Numerical Analysis,},
      volume={52},
       pages={653\ndash 677.},
}

\bib{AksoyluParks}{article}{
      author={Aksoylu, Burak},
      author={Parks, Michael~L},
       title={Variational theory and domain decomposition for nonlocal
  problems,},
        date={2011},
     journal={Applied Mathematics and Computation,},
      volume={217},
      number={14},
       pages={6498\ndash 6515.},
}

\bib{CMPer-Bassant}{article}{
      author={Ba{\v{z}}ant, Zden{\v{e}}k~P},
      author={Luo, Wen},
      author={Chau, Viet~T},
      author={Bessa, Miguel~A},
       title={Wave dispersion and basic concepts of peridynamics compared to
  classical nonlocal damage models,},
        date={2016},
     journal={Journal of Applied Mechanics,},
      volume={83},
      number={11},
       pages={111004.},
}

\bib{BobaruHu}{article}{
      author={Bobaru, Florin},
      author={Hu, Wenke},
       title={The meaning, selection, and use of the peridynamic horizon and
  its relation to crack branching in brittle materials,},
        date={2012},
     journal={International Journal of Fracture,},
      volume={176},
      number={2},
       pages={215\ndash 222.},
}

\bib{CMPer-Bobaru}{article}{
      author={Bobaru, Florin},
      author={Yang, Mijia},
      author={Alves, Leonardo~Frota},
      author={Silling, Stewart~A},
      author={Askari, Ebrahim},
      author={Xu, Jifeng},
       title={Convergence, adaptive refinement, and scaling in 1d
  peridynamics,},
        date={2009},
     journal={International Journal for Numerical Methods in Engineering,},
      volume={77},
      number={6},
       pages={852\ndash 877.},
}

\bib{CMPer-Chen}{article}{
      author={Chen, Xi},
      author={Gunzburger, Max},
       title={Continuous and discontinuous finite element methods for a
  peridynamics model of mechanics,},
        date={2011},
     journal={Computer Methods in Applied Mechanics and Engineering,},
      volume={200},
      number={9},
       pages={1237\ndash 1250.},
}

\bib{ChenBakenhusBobaru}{article}{
      author={Chen, Ziguang},
      author={Bakenhus, Drew},
      author={Bobaru, Florin},
       title={A constructive peridynamic kernel for elasticity,},
        date={2016},
     journal={Computer Methods in Applied Mechanics and Engineering,},
      volume={311},
       pages={356\ndash 373.},
}

\bib{CMPer-Dayal2}{article}{
      author={Dayal, Kaushik},
       title={Leading-order nonlocal kinetic energy in peridynamics for
  consistent energetics and wave dispersion,},
        date={2017},
     journal={Journal of the Mechanics and Physics of Solids,},
      volume={105},
       pages={235\ndash 253.},
}

\bib{CMPer-Dayal}{article}{
      author={Dayal, Kaushik},
      author={Bhattacharya, Kaushik},
       title={Kinetics of phase transformations in the peridynamic formulation
  of continuum mechanics,},
        date={2006},
     journal={Journal of the Mechanics and Physics of Solids,},
      volume={54},
      number={9},
       pages={1811\ndash 1842.},
}

\bib{Diehl}{article}{
      author={Diehl, P},
      author={Lipton, R},
      author={Schweitzer, MA},
       title={Numerical verification of a bond-based softening peridynamic
  model for small displacements: Deducing material parameters from classical
  linear theory.},
        date={2016},
}

\bib{CMPer-Du1}{article}{
      author={Du, Qiang},
      author={Gunzburger, Max},
      author={Lehoucq, Richard~B},
      author={Zhou, Kun},
       title={Analysis and approximation of nonlocal diffusion problems with
  volume constraints,},
        date={2012},
     journal={SIAM Review,},
      volume={54},
      number={4},
       pages={667\ndash 696.},
}

\bib{CMPer-Du5}{article}{
      author={Du, Qiang},
      author={Zhou, Kun},
       title={Mathematical analysis for the peridynamic nonlocal continuum
  theory,},
        date={2011},
     journal={ESAIM: Mathematical Modelling and Numerical Analysis,},
      volume={45},
      number={2},
       pages={217\ndash 234.},
}

\bib{CMPer-Emmrich}{incollection}{
      author={Emmrich, Etienne},
      author={Lehoucq, Richard~B},
      author={Puhst, Dimitri},
       title={Peridynamics: a nonlocal continuum theory,},
        date={2013},
   booktitle={Meshfree Methods for Partial Differential Equations vi,},
   publisher={Springer},
       pages={45\ndash 65.},
}

\bib{CMPer-Emmrich2}{article}{
      author={Emmrich, Etienne},
      author={Weckner, Olaf},
      author={others},
       title={On the well-posedness of the linear peridynamic model and its
  convergence towards the navier equation of linear elasticity,},
        date={2007},
     journal={Communications in Mathematical Sciences,},
      volume={5},
      number={4},
       pages={851\ndash 864.},
}

\bib{Handbook}{misc}{
      author={Florin, Bobaru},
      author={Foster, John~T},
      author={Geubelle, Philippe~H},
      author={Geubelle, Philippe~H},
      author={Silling, Stewart~A},
       title={Handbook of Peridynamic Modeling.},
   publisher={CRC Press.},
        date={2016},
}

\bib{CMPer-Silling7}{article}{
      author={Foster, John~T},
      author={Silling, Stewart~A},
      author={Chen, Weinong},
       title={An energy based failure criterion for use with peridynamic
  states,},
        date={2011},
     journal={International Journal for Multiscale Computational Engineering,},
      volume={9},
      number={6.},
}

\bib{CMPer-Ghajari}{article}{
      author={Ghajari, M},
      author={Iannucci, L},
      author={Curtis, P},
       title={A peridynamic material model for the analysis of dynamic crack
  propagation in orthotropic media,},
        date={2014},
     journal={Computer Methods in Applied Mechanics and Engineering,},
      volume={276},
       pages={431\ndash 452.},
}

\bib{CMPer-Guan}{article}{
      author={Guan, Qingguang},
      author={Gunzburger, Max},
       title={Stability and accuracy of time-stepping schemes and dispersion
  relations for a nonlocal wave equation,},
        date={2015},
     journal={Numerical Methods for Partial Differential Equations,},
      volume={31},
      number={2},
       pages={500\ndash 516.},
}

\bib{HaBobaru}{article}{
      author={Ha, Youn~Doh},
      author={Bobaru, Florin},
       title={Studies of dynamic crack propagation and crack branching with
  peridynamics,},
        date={2010},
     journal={International Journal of Fracture,},
      volume={162},
      number={1-2},
       pages={229\ndash 244.},
}

\bib{CMPer-Bobaru2}{article}{
      author={Ha, Youn~Doh},
      author={Bobaru, Florin},
       title={Characteristics of dynamic brittle fracture captured with
  peridynamics,},
        date={2011},
     journal={Engineering Fracture Mechanics,},
      volume={78},
      number={6},
       pages={1156\ndash 1168.},
}

\bib{CMPer-Han}{article}{
      author={Han, Fei},
      author={Lubineau, Gilles},
      author={Azdoud, Yan},
      author={Askari, Abe},
       title={A morphing approach to couple state-based peridynamics with
  classical continuum mechanics,},
        date={2016},
     journal={Computer Methods in Applied Mechanics and Engineering,},
      volume={301},
       pages={336\ndash 358.},
}

\bib{IK}{book}{
      author={Isaacson, E.},
      author={Keller, H.~B.},
       title={Analysis of Numerical Methods,},
       publisher={John Wiley and Sons.},
        date={1966},
    
}

\bib{CMPer-JhaLipton}{article}{
      author={Jha, Prashant~K},
      author={Lipton, Robert},
       title={Numerical analysis of peridynamic models in H\"older
  space,},
        date={2018},
     journal={SIAM Journal on Numerical Analysis.},
           volume={56},
      number={2},
       pages={906\ndash 941.},
       doi = {10.1137/17M1112236},
}

\bib{CMPer-Kilic}{article}{
      author={Kilic, Bahattin},
      author={Madenci, Erdogan},
       title={Coupling of peridynamic theory and the finite element method,},
        date={2010},
     journal={Journal of Mechanics of Materials and Structures,},
      volume={5},
      number={5},
       pages={707\ndash 733.},
}

\bib{CMPer-Lipton3}{article}{
      author={Lipton, Robert},
       title={Dynamic brittle fracture as a small horizon limit of
  peridynamics,},
        date={2014},
     journal={Journal of Elasticity,},
      volume={117},
      number={1},
       pages={21\ndash 50.},
}

\bib{CMPer-Lipton}{article}{
      author={Lipton, Robert},
       title={Cohesive dynamics and brittle fracture,},
        date={2016},
     journal={Journal of Elasticity,},
      volume={124},
      number={2},
       pages={143\ndash 191.},
}

\bib{CMPer-Lipton2}{article}{
      author={Lipton, Robert},
      author={Silling, Stewart},
      author={Lehoucq, Richard},
       title={Complex fracture nucleation and evolution with nonlocal
  elastodynamics,},
        date={2016},
     journal={arXiv preprint arXiv:1602.00247.},
}

\bib{CMPer-Liu}{article}{
      author={Liu, Wenyang},
      author={Hong, Jung-Wuk},
       title={A coupling approach of discretized peridynamics with finite
  element method,},
        date={2012},
     journal={Computer Methods in Applied Mechanics and Engineering,},
      volume={245},
       pages={163\ndash 175.},
}

\bib{CMPer-Lubineau}{article}{
      author={Lubineau, Gilles},
      author={Azdoud, Yan},
      author={Han, Fei},
      author={Rey, Christian},
      author={Askari, Abe},
       title={A morphing strategy to couple nonlocal to local continuum
  mechanics,},
        date={2012},
     journal={Journal of the Mechanics and Physics of Solids,},
      volume={60},
      number={6},
       pages={1088\ndash 1102.},
}

\bib{CMPer-Du3}{article}{
      author={Mengesha, Tadele},
      author={Du, Qiang},
       title={Analysis of a scalar peridynamic model with a sign changing
  kernel,},
        date={2013},
     journal={Discrete Contin. Dynam. Systems B,},
      volume={18},
       pages={1415\ndash 1437.},
}

\bib{CMPer-Mengesha}{article}{
      author={Mengesha, Tadele},
      author={Du, Qiang},
       title={Nonlocal constrained value problems for a linear peridynamic
  navier equation,},
        date={2014},
     journal={Journal of Elasticity,},
      volume={116},
      number={1},
       pages={27\ndash 51.},
}

\bib{CMPer-Oterkus}{article}{
      author={Oterkus, Erkan},
      author={Madenci, Erdogan},
      author={Weckner, Olaf},
      author={Silling, Stewart},
      author={Bogert, Philip},
      author={Tessler, Alexander},
       title={Combined finite element and peridynamic analyses for predicting
  failure in a stiffened composite curved panel with a central slot,},
        date={2012},
     journal={Composite Structures,},
      volume={94},
      number={3},
       pages={839\ndash 850.},
}

\bib{CMPer-Seleson}{article}{
      author={Seleson, Pablo},
      author={Beneddine, Samir},
      author={Prudhomme, Serge},
       title={A force-based coupling scheme for peridynamics and classical
  elasticity,},
        date={2013},
     journal={Computational Materials Science,},
      volume={66},
       pages={34\ndash 49.},
}

\bib{CMPer-Silling5}{article}{
      author={Silling, SA},
      author={Weckner, O},
      author={Askari, E},
      author={Bobaru, Florin},
       title={Crack nucleation in a peridynamic solid,},
        date={2010},
     journal={International Journal of Fracture,},
      volume={162},
      number={1-2},
       pages={219\ndash 227.},
}

\bib{CMPer-Silling}{article}{
      author={Silling, Stewart~A},
       title={Reformulation of elasticity theory for discontinuities and
  long-range forces,},
        date={2000},
     journal={Journal of the Mechanics and Physics of Solids,},
      volume={48},
      number={1},
       pages={175\ndash 209.},
}

\bib{CMPer-Silling8}{article}{
      author={Silling, Stewart~A},
      author={Askari, Ebrahim},
       title={A meshfree method based on the peridynamic model of solid
  mechanics,},
        date={2005},
     journal={Computers \& Structures,},
      volume={83},
      number={17},
       pages={1526\ndash 1535.},
}

\bib{CMPer-Silling4}{article}{
      author={Silling, Stewart~A},
      author={Lehoucq, Richard~B},
       title={Convergence of peridynamics to classical elasticity theory,},
        date={2008},
     journal={Journal of Elasticity,},
      volume={93},
      number={1},
       pages={13\ndash 37.},
}

\bib{SillBob}{article}{
      author={Silling, Stewart~Andrew},
      author={Bobaru, Florin},
       title={Peridynamic modeling of membranes and fibers,},
        date={2005},
     journal={International Journal of Non-Linear Mechanics,},
      volume={40},
      number={2},
       pages={395\ndash 409.},
}

\bib{CMPer-Tian3}{article}{
      author={Tian, X.},
      author={Du, Q.},
       title={Asymptotically compatible schemes and applications to robust
  discretization of nonlocal models,},
        date={2014},
     journal={SIAM Journal on Numerical Analysis,},
      volume={52},
      number={4},
       pages={1641\ndash 1665.},
}

\bib{CMPer-Du2}{article}{
      author={Tian, Xiaochuan},
      author={Du, Qiang},
       title={Analysis and comparison of different approximations to nonlocal
  diffusion and linear peridynamic equations,},
        date={2013},
     journal={SIAM Journal on Numerical Analysis,},
      volume={51},
      number={6},
       pages={3458\ndash 3482.},
}

\bib{CMPer-Du4}{article}{
      author={Tian, Xiaochuan},
      author={Du, Qiang},
      author={Gunzburger, Max},
       title={Asymptotically compatible schemes for the approximation of
  fractional Laplacian and related nonlocal diffusion problems on bounded
  domains,},
        date={2016},
     journal={Advances in Computational Mathematics,},
      volume={42},
      number={6},
       pages={1363\ndash 1380.},
}

\bib{WeckAbe}{article}{
      author={Weckner, Olaf},
      author={Abeyaratne, Rohan},
       title={The effect of long-range forces on the dynamics of a bar,},
        date={2005},
     journal={Journal of the Mechanics and Physics of Solids,},
      volume={53},
      number={3},
       pages={705\ndash 728.},
}

\bib{CMPer-Weckner2}{article}{
      author={Weckner, Olaf},
      author={Brunk, Gerd},
      author={Epton, Michael~A},
      author={Silling, Stewart~A},
      author={Askari, Ebrahim},
       title={Comparison between local elasticity and nonlocal peridynamics,},
        date={2009},
     journal={Sandia National Laboratory Report J.,},
      volume={1109},
       pages={2009.},
}

\bib{CMPer-Weckner}{article}{
      author={Weckner, Olaf},
      author={Emmrich, Etienne},
       title={Numerical simulation of the dynamics of a nonlocal,
  inhomogeneous, infinite bar,},
        date={2005},
     journal={J. Comput. Appl. Mech.,},
      volume={6},
      number={2},
       pages={311\ndash 319.},
}

\bib{CMPer-Weckner3}{inproceedings}{
      author={Weckner, Olaf},
      author={Silling, Stewart},
      author={Askari, Abe},
       title={Dispersive wave propagation in the nonlocal peridynamic theory},
organization={American Society of Mechanical Engineers,},
        date={2008},
   booktitle={ASME 2008 International Mechanical Engineering Congress and
  Exposition,},
       pages={503\ndash 504.},
}

\bib{CMPer-Wildman}{article}{
      author={Wildman, Raymond~A},
      author={Gazonas, George~A},
       title={A finite difference-augmented peridynamics method for reducing
  wave dispersion,},
        date={2014},
     journal={International Journal of Fracture,},
      volume={190},
      number={1-2},
       pages={39\ndash 52.},
}

\bib{CMPer-Zaccariotto}{article}{
      author={Zaccariotto, Mirco},
      author={Tomasi, Davide},
      author={Galvanetto, Ugo},
       title={An enhanced coupling of pd grids to FE meshes,},
        date={2017},
     journal={Mechanics Research Communications,},
      volume={84},
       pages={125\ndash 135.},
}

\end{biblist}
\end{bibdiv}

\end{document}